\setlist{noitemsep}
\newtheorem{theorem}{Theorem}
\newtheorem{proposition}{Proposition}
\newtheorem{lemma}{Lemma}
\newtheorem{definition}{Definition}
\newtheorem{corollary}{Corollary}
\newtheorem{obs}{Observation}
\newtheorem*{remark}{Remark}
\newcommand{\R}{\mathbb{R}}
\renewcommand{\S}{\mathcal{S}}
\newcommand{\Snk}{\mathcal{S}^{n,k}}
\newcommand{\psd}{\mathcal{S}^n_+}
\newcommand{\ip}[2]{\langle #1, #2\rangle}
\newcommand{\distb}{\overline{\textup{dist}}}
\newcommand{\dist}{\textup{dist}}
\newcommand{\diag}{\textup{diag}}
\newcommand{\closure}{$k$-PSD closure\xspace}
\newcommand{\e}{\varepsilon}
\newcommand{\E}{\mathbb{E}}
\newcommand{\cI}{\mathcal{I}}
\DeclareMathOperator{\var}{Var}
\DeclareMathOperator{\Tr}{Tr}
\newcounter{mynotes}
\newcommand{\overbar}[1]{\mkern 1.5mu\overline{\mkern-1.5mu#1\mkern-1.5mu}\mkern 1.5mu}
\newcommand{\overtilde}[1]{\mkern 1.5mu\widetilde{\mkern-1.5mu#1\mkern-1.5mu}\mkern 1.5mu}
\title{Sparse PSD approximation of the PSD cone
}
\author{Grigoriy Blekherman, Santanu S. Dey, Marco Molinaro, Shengding Sun}
\begin{document}
\maketitle
\begin{abstract}
While semidefinite programming (SDP) problems are polynomially solvable in theory, it is often difficult to solve large SDP instances in practice. One technique to address this issue is to relax the global positive-semidefiniteness (PSD) constraint and only enforce PSD-ness on smaller $k\times k$ principal submatrices --- we call this the \emph{sparse SDP relaxation}. Surprisingly, it has been observed empirically that in some cases this approach appears to produce bounds that are close to the optimal objective function value of the original SDP. In this paper, we formally attempt to compare the strength of the sparse SDP relaxation vis-\`a-vis the original SDP from a theoretical perspective. 

In order to simplify the question, we arrive at a data independent version of it, where we compare the sizes of SDP cone and the \closure, which is the cone of matrices where PSD-ness is enforced on all $k\times k$ principal submatrices. In particular, we investigate the question of how far a matrix of unit Frobenius norm in the \closure can be from the SDP cone. We provide two incomparable upper bounds on this farthest distance as a function of $k$ and $n$. We also provide matching lower bounds, which show that the upper bounds are tight within a constant in different regimes of $k$ and $n$. Other than linear algebra techniques, we extensively use probabilistic methods to arrive at these bounds. One of the lower bounds is obtained by observing a connection between matrices in the \closure and matrices satisfying the restricted isometry property (RIP). 
\end{abstract}
\section{Introduction}

\subsection{Motivation}

Semidefinite programming (SDP) relaxations are an important tool to provide dual bounds for many discrete and continuous non-convex optimization problems~\cite{wolkowicz2012handbook}. These SDP relaxations have the form 
	\begin{eqnarray}\label{eq:SDP}
	\begin{array}{rcl}
	&\textup{min} & \langle C, X\rangle\\
	&\textup{s.t.} & \langle A^i, X\rangle \leq b_i ~~\ \forall i \in \{1, \dots, m\}\\
	&& X \in \psd,
	\end{array}
	\end{eqnarray}
	where $C$ and the $A^i$'s are $n \times n$ matrices, $\ip{M}{N} := \sum_{i,j} M_{ij} N_{ij}$, and $\psd$ denotes the cone of $n\times n$ symmetric positive semidefinite (PSD) matrices: 
	
	$$\psd=\{ X\in\R^{n\times n}\,|\,X=X^T, ~x^{\top} Xx\geq 0, ~\forall x\in\R^n\}. $$

	In practice, it is often computationally challenging to solve large-scale instances of SDPs due to the global PSD constraint $X \in \psd$. One technique to address this issue is to consider a further relaxation that replaces the PSD cone by a larger one $\S \supseteq \psd$. In particular, one can enforce PSD-ness on (some or all) smaller $k\times k$ principal submatrices of $X$, i.e., we consider the problem
	\begin{eqnarray}\label{eq:sparseSDP}
	\begin{array}{rcl}
	&\textup{min} & \langle C, X\rangle\\
	&\textup{s.t.} & \langle A^i, X\rangle \leq b_i \ \forall i \in \{1, \dots, m\}\\
	&& \textup{selected } k \times k \textup{ principal submatrices of }X \in \mathcal{S}^k_+.
	\end{array}
	\end{eqnarray}
 We call such a relaxation the \emph{sparse SDP relaxation}.

 One reason why these relaxations may be solved more efficiently in practice is that we can enforce PSD constraints by iteratively separating linear constraints. Enforcing PSD-ness on smaller $k\times k$ principal submatrices leads to linear constraints that are sparser, an important property leveraged by linear programming solvers that greatly improves their efficiency~\cite{bixby2002solving, walter2014sparsity, amaldi2014coordinated,coleman1990large,reid1982sparsity}. This is an important motivation for using~{sparse SDP} relaxations~\cite{qualizza2012linear, baltean2018selecting,DeyWorking}. (This is also the motivation for studying approximations of polytopes~\cite{dey2015approximating}, convex hulls of integer linear programs~\cite{dey2017analysis,walter2014sparsity,dey2018theoretical}, and integer programming formulations~\cite{hojny2017size} by sparse linear inequalities.) This is our reason for calling the relaxation obtained by enforcing the SDP constraints on smaller $k\times k$ principal submatrices of $X$ as the sparse SDP relaxation.

	It has been observed that sparse SDP relaxations not only can be solved much more efficiently in practice, but in some cases they produce bounds that are close to the optimal value of the original SDP. See~\cite{qualizza2012linear, baltean2018selecting,DeyWorking} for successful applications of this technique for solving box quadratic programming instances, and~\cite{sojoudi2014exactness, kocuk2016strong} for solving the optimal power flow problem in power systems.

	Despite their computational success, theoretical understanding of sparse SDP relaxations remains quite limited. In this paper, we initiate such theoretical investigation. Ideally we would like to compare the objective function values of (\ref{eq:SDP}) and (\ref{eq:sparseSDP}), but this appears to be a very challenging problem. Therefore, we consider a simpler data-independent question, where we ignore the data of the SDP and the particular selected principal submatrices, to arrive at the following:

	\vspace{14pt}

	\mbox{
	\begin{minipage}{0.9\textwidth}
		\it How close to the PSD cone $\psd$ do we get when we only enforce PSD-ness on $k \times k$ principal submatrices? 
	\end{minipage}
	}\\
	\vspace{10pt}

To formalize this question, we begin by defining the \emph{\closure}, namely matrices that satisfy all $k \times k$ principal submatrices PSD constraints. 
\begin{definition}[\closure]
	Given positive integers $n$ and $k$ where $2 \leq k \le n$, the \closure  $\mathcal{S}^{n,k}$ is the set of all $n \times n$ symmetric real matrices where all $k \times k$ principal submatrices are PSD.  
\end{definition}

It is clear that the \closure is a relaxation of the PSD cone (i.e., $\S^{n,k} \supseteq \psd$ for all $2 \leq k \le n$) and is an increasingly better approximation as the parameter $k$ increases, i.e., we enforce that larger chunks of the matrix are PSD (in particular $\S^{n,n} = \psd$). The SOCP relaxation formulated in \cite{sojoudi2014exactness} is equivalent to using the \closure with $k=2$ to approximate the PSD cone. Our definition is a generalization of this construction.
	
It is worth noting that the dual cone of $\mathcal{S}^{n,k}$ is the set of symmetric matrices with factor width $k$, defined and studied in~\cite{boman2005factor}. In particular, the set of symmetric matrices with factor width 2 is the set of scaled diagonally dominant matrices~\cite{wang2019polyhedral}, i.e., symmetric matrices $A$ such that $DAD$ is diagonally dominant for some positive diagonal matrix $D$. Note that~\cite{ahmadi2019dsos} uses scaled diagonally dominant for constructing inner approximation of the SDP cones for use in solving polynomial optimization problems.		

	\subsection{Problem setup}
	
We are interested in understanding how well the \closure approximates the PSD cone for the different values of $k$ and $n$. To measure this approximation we would like to consider the matrix in the \closure that is farthest from the PSD cone. We need to make two choices here: the norm to measure this distance and a normalization method (since otherwise there is no upper bound on the distance between matrices in the PSD cone and the \closure). 

We will use the Frobenius norm $\|\cdot\|_F$ for both purposes. That is, the distance between a matrix $M$ and the PSD cone is measured as $\dist_F(M,\psd) = \textup{inf}_{N \in \S^n_{+}}\|M - N\|_F$, and we restrict our attention to matrices in \closure with Frobenius norm equal to $1$.
Thus we arrive at the \emph{(normalized) Frobenius distance} between the \closure and the PSD cone, namely the largest distance between a unit-norm matrix $M$ in $\S^{n,k}$ and the cone $\psd$:
\begin{align*}
\distb_F(\mathcal{S}^{n,k},\psd)&=\sup_{M\in\mathcal{S}^{n,k},\,\|M\|_F=1}\dist_F(M,\psd)\\
		& = \sup_{M\in\mathcal{S}^{n,k},\,\|M\|_F=1} \inf_{N \in \psd} \|M - N\|_F.
\end{align*}
	Note that since the origin belongs to $\psd$ this distance is at most $1$.
	
The rest of the paper is organized as follows: Section~\ref{sec:res} presents all our results and Section~\ref{sec:con} concludes with some open questions. Then Section~\ref{sec:pre} presents additional notation and background results needed for proving the main results. The remaining sections present the proofs of the main results.

\section{Our results}\label{sec:res}

In order to understand how well the \closure approximates the PSD cone we present:
	
	\begin{itemize}
		\item Matching upper and lower bounds on $\distb_F(\mathcal{S}^{n,k},\psd)$ for different regimes of $k$.
		
		\item Show that 
		{a polynomial number} of $k \times k$ PSD constraints are sufficient to provide a good approximation (in Frobenius distance) to the full \closure (which has ${n \choose k} \approx \big(\frac{en}{k}\big)^k$ such constraints).  
	\end{itemize}

We present these result in more details in the following subsections.

\subsection{Upper bounds}\label{main:upper}

	First we show that the distance between the \closure and the SDP cone is at most roughly $\approx \frac{n-k}{n}$. In particular, this bound approximately goes from $1$ to $0$ as the parameter $k$ goes from $2$ to $n$, as expected. 

\begin{theorem}\label{thm:upper1}
	For all $2\leq k<n$ we have 
	\begin{equation}\label{upper1}
	\distb_F(\mathcal{S}^{n,k},\mathcal{S}^n_+)\leq \frac{n -k}{ n + k - 2}.
	\end{equation}
\end{theorem}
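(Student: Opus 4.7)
The plan is to exhibit, for any $M \in \mathcal{S}^{n,k}$ with $\|M\|_F = 1$, a nearby PSD matrix built from $M$ itself and its diagonal part $D := \diag(M)$. Note $D \succeq 0$ because each $M_{ii}$ is a $1\times 1$ PSD principal submatrix. The nontrivial PSD certificate I would use is the averaged padded submatrix
$$\bar{M} \;:=\; \binom{n}{k}^{-1} \sum_{|S|=k} \tilde{M}_S \;\succeq\; 0,$$
where $\tilde{M}_S$ is $M_S$ placed inside an $n\times n$ zero matrix (PSD since $M_S$ is). An entry-wise computation gives $\bar{M} = \frac{k(k-1)}{n(n-1)} M + \frac{k(n-k)}{n(n-1)} D$, so after rescaling
$$N_0 \;:=\; M + \tfrac{n-k}{k-1}\, D \;\succeq\; 0.$$
Combining $N_0$ and $D$ with nonnegative coefficients, $N := sM + wD$ is PSD whenever $s \ge 0$ and $w \ge s(n-k)/(k-1)$.

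Next I would pick $s,w$ inside this feasible cone to minimize $\|M - N\|_F$. Writing $d := \|D\|_F^2 \in [0,1]$ and using that $D$ and $M - D$ are Frobenius-orthogonal (so $\|M - D\|_F^2 = 1 - d$),
$$\|M - sM - wD\|_F^2 \;=\; (1-s)^2(1-d) + (1 - s - w)^2 d.$$
The unconstrained minimum $(s,w)=(1,0)$ is infeasible when $k<n$, and the $s=0$ boundary gives a worse bound than the slanted boundary, so the constraint $w = s(n-k)/(k-1)$ binds. Substituting and optimizing the resulting one-variable quadratic in $s$ yields
$$\dist_F(M, \psd)^2 \;\le\; \frac{(r-1)^2\, d(1-d)}{(1-d) + r^2 d}, \qquad r := \tfrac{n-1}{k-1}.$$

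Finally, I would treat $d$ as a free parameter in $[0,1]$ and maximize. Setting the derivative to zero locates the critical point at $d^{\star} = 1/(r+1) = (k-1)/(n+k-2)$, at which the bound becomes $\bigl((r-1)/(r+1)\bigr)^2 = \bigl((n-k)/(n+k-2)\bigr)^2$. Taking square roots gives $\dist_F(M, \psd) \le (n-k)/(n+k-2)$, as claimed. The main obstacle, I think, is spotting the PSD certificate $M + \tfrac{n-k}{k-1} D \succeq 0$ coming from the uniform average of padded principal submatrices; once that is in hand, the rest is a two-variable quadratic optimization followed by a one-dimensional maximization. A mild concern is that restricting attention to nonnegative combinations of only $M$ and $D$ (rather than arbitrary nonnegative combinations of all $\tilde{M}_S$) could lose a constant factor, but the output matches the target $(n-k)/(n+k-2)$ exactly, so this two-parameter family is already tight for proving Theorem~\ref{thm:upper1}.
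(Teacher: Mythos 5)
Your proof is correct and rests on the same key idea as the paper: the PSD certificate built by averaging the $\binom{n}{k}$ padded principal submatrices of $M$ (the paper's $\overtilde{M}$, your $\bar{M}$), which gives precisely the relation $\overtilde{M}=\frac{k(k-1)}{n(n-1)}M+\frac{k(n-k)}{n(n-1)}D$. The only difference is the final computation: the paper picks the single scaling $\alpha=\frac{2n(n-1)}{k(n+k-2)}$ so that $M-\alpha\overtilde{M}$ equals $\pm\frac{n-k}{n+k-2}M$ entrywise and the bound drops out with no case analysis, whereas you optimize the two-parameter family $sM+wD$ over its feasibility cone and then maximize over $d=\|D\|_F^2$, recovering the same value and showing along the way that the paper's fixed scaling is exactly the optimizer at the worst-case $d^\star=(k-1)/(n+k-2)$.
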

	
	The idea for obtaining this upper bound is the following: given any matrix $M$ in the \closure $\mathcal{S}^{n,k}$, we construct a PSD matrix $\overtilde{M}$ by taking the average of the (PSD) matrices obtained by zeroing out all entries of $M$ but those in a $k \times k$ principal submatrix; the distance between $M$ and $\overtilde{M}$ provides an upper bound on $\distb_F(\mathcal{S}^{n,k},\mathcal{S}^n_+)$. The proof of Theorem~\ref{thm:upper1} is provided in Section~\ref{sec:upper1}.

	\medskip
	It appears that for $k$ close to $n$ this upper bound is not tight. In particular, our next upper bound is of the form $(\frac{n-k}{n})^{3/2}$, showing that the gap between the \closure and the PSD cone goes to 0 as $n-k \rightarrow n$ at a faster rate than that prescribed by the previous theorem. In particular, for $k = n - c$ for a constant $c$, Theorem~\ref{thm:upper1} gives an upper bound of $O\left(\frac{1}{n}\right)$ whereas the next Theorem gives an improved upper bound of $O\left(\frac{1}{n^{3/2}}\right)$. 

\begin{theorem}\label{thm:upper2}
	Assume $n \ge 97$ and $k \ge \frac{3n}{4}$. Then 
	\begin{equation}\label{upper2}	\distb_F(\mathcal{S}^{n,k},\mathcal{S}^n_+) \le 96 \, \bigg(\frac{n-k}{n}\bigg)^{3/2}. 
	\end{equation}
\end{theorem}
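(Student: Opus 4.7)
The plan is to combine a spectral rank bound with a Rayleigh-quotient inequality in the style of Theorem~\ref{thm:upper1}. First I would establish two structural facts about any $M\in\mathcal{S}^{n,k}$: (a) by Cauchy's interlacing theorem applied to the PSD principal submatrices $M_S$ with $|S|=k$, $M$ has at most $n-k$ negative eigenvalues; and (b) taking the expectation of $v_S^\top M_S v_S\geq 0$ over a uniformly random size-$k$ subset $S\subseteq[n]$ (the same averaging already used for Theorem~\ref{thm:upper1}) yields
\[
v^\top M v \;\geq\; -\frac{n-k}{k-1}\,v^\top D v \qquad \text{for every } v\in\mathbb{R}^n,
\]
where $D := \mathrm{diag}(M)$.

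Let $v_1,\dots,v_r$ be orthonormal eigenvectors of $M$ associated with its negative eigenvalues $\lambda_1,\dots,\lambda_r$; by (a) one has $r\leq n-k$, and by (b) applied to each $v_j$,
$|\lambda_j|\leq \tfrac{n-k}{k-1}\,v_j^\top D v_j$. Since $\mathrm{dist}_F(M,\mathcal{S}^n_+)^2 = \sum_j \lambda_j^2$, the whole problem reduces to bounding
\[
\sum_{j=1}^{r} (v_j^\top D v_j)^2 \;\lesssim\; \frac{n-k}{n},
\]
which together with the prefactor $\big(\tfrac{n-k}{k-1}\big)^2 \leq O\!\big(\tfrac{(n-k)^2}{n^2}\big)$ (valid because $k\geq 3n/4$) gives the target $\mathrm{dist}_F(M,\mathcal{S}^n_+)^2 \lesssim (n-k)^3/n^3$.

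The crux and main obstacle is this sum-of-squares estimate. The two naive inequalities $\sum_j (v_j^\top D v_j)^2\leq r(\max_i M_{ii})^2$ and $\sum_j (v_j^\top D v_j)^2\leq \|D\|_F^2\leq 1$ each fall short of the target by a factor of $\sqrt{n}$ in the regime $k$ close to $n$. To close the gap I would split based on the concentration of the diagonal $D$. In the \emph{spread} case $\max_i M_{ii}\lesssim 1/\sqrt{n}$, every Rayleigh quotient $v_j^\top D v_j$ is $\lesssim 1/\sqrt{n}$ so summing $r\leq n-k$ squared terms already yields $\sum_j (v_j^\top D v_j)^2 \lesssim (n-k)/n$ directly. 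In the \emph{concentrated} case with some $M_{i_0 i_0}\gg 1/\sqrt{n}$, I would peel off the rank-one PSD matrix $N:=(Me_{i_0})(Me_{i_0})^\top / M_{i_0 i_0}$, which coincides with $M$ on row and column $i_0$; the residual $M-N$ has that row and column zero, and the $2\times 2$ PSD constraints together with the Schur-complement characterization of PSDness show that the restriction of $M-N$ to $[n]\setminus\{i_0\}$ belongs to $\mathcal{S}^{n-1,k-1}$. Since $N\succeq 0$, $\mathrm{dist}_F(M,\mathcal{S}^n_+)\leq \mathrm{dist}_F(M-N,\mathcal{S}^n_+)$, reducing the question to a smaller instance in which the large diagonal entry has been removed.

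The hard part will be the arithmetic of the concentrated case: one must verify that the peeling does not increase the Frobenius norm too much, that the reduced problem still satisfies the hypotheses $k'\geq 3n'/4$ and $n'\geq 97$ required to apply the analysis inductively, and that the per-step progress compounds into the explicit constant $96$. The precise numerical hypotheses in the theorem statement are almost certainly calibrated to make this bookkeeping go through; in particular, the factor $96$ emerges from the worst case of the interplay between the spread and concentrated regimes.
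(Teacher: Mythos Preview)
Your structural steps (a) and (b) are correct, the reduction of $\dist_F(M,\psd)^2$ to $\big(\tfrac{n-k}{k-1}\big)^2\sum_{j\le r}(v_j^\top D v_j)^2$ is valid, and the spread case goes through. But the concentrated case is a genuine gap, not merely arithmetic to be filled in. After peeling $N=(Me_{i_0})(Me_{i_0})^\top/M_{i_0 i_0}$ you have, with $c=Me_{i_0}$,
\[
\|M-N\|_F^2 \;=\; 1 \;-\; \frac{2\,c^\top M c}{M_{i_0 i_0}} \;+\; \frac{\|c\|_2^4}{M_{i_0 i_0}^2},
\]
and $c^\top M c = e_{i_0}^\top M^3 e_{i_0}$ can be negative since $M\notin\psd$, so $\|M-N\|_F$ may strictly exceed $1$. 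Your threshold $\max_i M_{ii}\gtrsim 1/\sqrt{n}$ permits up to $\Theta(n)$ large-diagonal indices (only $\sum_i M_{ii}^2\le 1$ is available), hence $\Theta(n)$ peelings, and nothing in your sketch prevents the Frobenius norm from compounding multiplicatively across them. The guess that the constants $96$, $n\ge 97$, $k\ge 3n/4$ are ``calibrated to make this bookkeeping go through'' is wrong: they come from a completely different argument.

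The paper does not touch the diagonal of $M$ at all and instead bounds the single number $\lambda_1$ by \emph{randomly sparsifying the eigenvector} $v^1$. Coordinates with $(v^1_i)^2>2/n$ are kept; each remaining coordinate is kept independently with probability $p=1-2(n-k)/n$ and rescaled by $1/p$. The resulting $V$ is $k$-sparse with probability $\ge 1/2$; $\langle V,v^1\rangle\ge 1/2$ with probability $>5/6$ by Chebyshev (this is the sole place $n\ge 97$ is used, to force $16/n<1/6$); and, crucially, $\sum_i\mu_i\langle V,w^i\rangle^2\le 24(n-k)/n^{3/2}$ with probability $\ge 2/3$, obtained from $\E[\Delta^\top\overbar{M}\Delta]\le \tfrac{2(1-p)}{np}\Tr(\overbar{M})$ together with $\Tr(\overbar{M})\le\sqrt{n}$. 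A union bound produces a deterministic $k$-sparse $\bar v$ with $0\le \bar v^\top M\bar v\le -\lambda_1/4+24(n-k)/n^{3/2}$, hence $\lambda_1\le 96(n-k)/n^{3/2}$, and multiplying by $\sqrt{n-k}$ finishes. The large/small dichotomy you anticipated does appear, but on the \emph{eigenvector} rather than on $\diag(M)$, and it is resolved in one shot by the variance calculation rather than by induction.
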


It is easy to verify that for sufficiently large $r$  if $k > rn$, then the upper bound given by Theorem~\ref{thm:upper2} dominates the upper bound given by Theorem~\ref{thm:upper1}. 

	The proof of Theorem \ref{thm:upper2} is more involved than that of Theorem \ref{thm:upper1}. The high-level idea is the following: Using Cauchy's Interlace Theorem for eigenvalues of hermitian matrices, we first verify that every matrix in $\mathcal{S}^{n,k}$ has at most $n - k$ negative eigenvalues. Since the PSD cone consists of symmetric matrices with non-negative eigenvalues, it is now straightforward to see that the distance from a unit-norm matrix $M \in \mathcal{S}^{n,k}$ to $\psd$ is upper bounded by the absolute value of the most negative eigenvalue of $M$ times $\sqrt{n -k}$. To bound a negative eigenvalue $-\lambda$ of $M$ (where $\lambda \geq 0$), we consider an associated eigenvector $v \in \R^n$ and  randomly sparsify it to obtain a random vector $V$ that has at most $k$ non-zero entries. By construction we ensure that $V \approx v$, and that $V$ remains almost orthogonal to all other eigenvectors of $M$. This guarantees that $V^{\top} M V \approx -\lambda + \textrm{``small error''}$. On the other hand, since only $k$ entries of $V$ are non-zero, it guarantees that $V^\top M V$ only depends on a $k \times k$ submatrix of $M$, which is PSD by the definition of the \closure; thus, we have $V^\top M V \ge 0$. Combining these observations we get that $\lambda \leq \textrm{``small error''}$. This eigenvalue bound is used to upper bound the distance from $M$ to the PSD cone.  A proof of Theorem~\ref{thm:upper2} is provided in Section~\ref{Sec:proofThm2}.
	

\subsection{Lower bounds}

	We next provide lower bounds on $\distb_F(\mathcal{S}^{n,k},\mathcal{S}^n_+)$ that show that the upper bounds presented in Section~\ref{main:upper} are tight for various regimes of $k$. The first lower bound, presented in the next theorem, is obtained by a simple construction of an explicit matrix in the \closure that is far from being PSD. Its proof is provided in Section~\ref{sec:proofThmlower1}.
	
\begin{theorem}\label{thm:lower1}
	For all $2\leq k<n$, we have 
	\begin{equation}\label{lower1}
\distb_F(\mathcal{S}^{n,k},\mathcal{S}^n_+)\geq\frac{n - k}{\sqrt{ (k-1)^2\, n  + n(n-1)} }.
	\end{equation}
\end{theorem}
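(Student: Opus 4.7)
My plan is to exhibit an explicit matrix $M \in \Snk$ whose normalized Frobenius distance to $\psd$ equals the claimed lower bound exactly. The candidate I would take is
$$ M \;=\; I_n - \tfrac{1}{k} J_n, $$
where $J_n$ denotes the $n \times n$ all-ones matrix. This choice is natural because $M$ is symmetric, invariant under coordinate permutations, and simultaneously ``diagonalizes'' along $\mathbf{1}$ and its orthogonal complement at every principal-submatrix scale, which is what makes both the membership check and the distance computation transparent.

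First I would verify that $M \in \Snk$. Every $k \times k$ principal submatrix of $M$ equals $I_k - \tfrac{1}{k} J_k$, whose eigenvalues are $1 - k/k = 0$ (on $\mathbf{1}_k$) and $1$ (with multiplicity $k-1$, on $\mathbf{1}_k^{\perp}$). Hence every $k \times k$ principal submatrix is PSD, so $M \in \Snk$.

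Next I would compute $\dist_F(M,\psd)$ and $\|M\|_F$. The eigenvalues of $M$ itself are $1 - n/k$ (simple, with eigenvector $\mathbf{1}_n$) and $1$ (with multiplicity $n-1$). Using the standard fact that the Frobenius projection of a symmetric matrix onto $\psd$ is obtained by truncating negative eigenvalues to zero, I get
$$ \dist_F(M,\psd) \;=\; \Big|1 - \tfrac{n}{k}\Big| \;=\; \frac{n-k}{k}, \qquad \|M\|_F^2 \;=\; \Big(\tfrac{n-k}{k}\Big)^2 + (n-1). $$
A short algebraic simplification rewrites $\|M\|_F^2$ as $[n(k-1)^2 + n(n-1)]/k^2$, and then taking the ratio gives
$$ \frac{\dist_F(M,\psd)}{\|M\|_F} \;=\; \frac{n-k}{\sqrt{n(k-1)^2 + n(n-1)}}. $$
Applied to the unit-Frobenius-norm matrix $M/\|M\|_F \in \Snk$, this produces exactly the claimed lower bound on $\distb_F(\Snk,\psd)$.

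There is no substantive obstacle in this proof; the only real content is the choice of $M$. The heuristic reason I expect this particular $M$ to be optimal within the natural two-parameter family $\alpha I_n + \beta J_n$ (the matrices fixed by the symmetric group acting on coordinates) is that the coefficient $\beta = -1/k$, up to scaling, is the most extreme choice for which all $k \times k$ principal submatrices remain PSD, i.e., $M$ lies exactly on the boundary of $\Snk$ while pushing its smallest eigenvalue as negative as possible.
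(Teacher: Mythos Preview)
Your proof is correct and is essentially the same as the paper's: your matrix $M = I_n - \tfrac{1}{k}J_n$ is exactly (a scalar multiple of) the paper's $G(\bar a,\bar b,n)$ with $\bar b = (k-1)\bar a$, i.e., the boundary case of the family $G(a,b,n) = (a+b)I_n - a\mathbf{1}\mathbf{1}^\top$ in which every $k\times k$ principal submatrix is just barely PSD. The paper merely sets up the two-parameter family first and then specializes, whereas you go straight to the extremal member; the eigenvalue and distance computations are identical.
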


Notice that for small values of $k$ the above lower bound is approximately $\approx \frac{n - k}{n}$ which matches the upper bound from Theorem~\ref{thm:upper1}. For very large values of $k$. i.e. $k  = n - c$ for a constant $c$, the above lower bound is approximately $\approx \frac{c}{n^{3/2}}$ which  matches the upper bound by Theorem~\ref{thm:upper2}. 

%
%
%

Now consider the regime where $k$ is a constant fraction of $n$. While our upper bounds give $\distb_F(\Snk,\mathcal{S}^n_+) = O(1)$, Theorem~\ref{thm:lower1} only shows that this distance is at least $\Omega(\frac{1}{\sqrt{n}})$, leaving open the possibility that the \closure provides a sublinear approximation of the PSD cone in this regime. Unfortunately, our next lower bound shows that this is not that case: the upper bounds are tight (up to a constant) in this regime. 

\begin{theorem}\label{thm:lower2}
	Fix a constant $r < \frac{1}{93}$ and let $k = rn$. Then for all $ k\geq 2$, $$\distb_F(\mathcal{S}^{n,k},\mathcal{S}^{n}_+)>\frac{\sqrt{r-93r^2}}{\sqrt{162r+3}},$$ which is independent of $n$. 
\end{theorem}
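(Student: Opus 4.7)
The plan is to construct an explicit matrix $M \in \Snk$ that is far from $\psd$ via the Restricted Isometry Property (RIP), realizing the connection mentioned in the introduction. Recall that $A \in \R^{m \times n}$ has $(k,\delta)$-RIP if $(1-\delta)\|x\|^2 \le \|Ax\|^2 \le (1+\delta)\|x\|^2$ for every $k$-sparse vector $x \in \R^n$. A standard probabilistic argument (a random subgaussian matrix together with a union bound over a covering net of $k$-sparse unit vectors) yields the existence of such $A$ whenever $m \ge c(\delta)\, k \log(n/k)$ for an explicit function $c(\delta)$; a careful accounting in this existence statement is what will eventually produce the specific numerical factors $93$ and $162$ in the theorem. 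For $k = rn$ with $r < 1/93$, I would pick $\delta \in (0,1)$ and $m = \rho n$ satisfying this bound, with $\rho$ strictly less than $1$.

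Given $A$, define the symmetric matrix
\[
M := A^\top A - (1-\delta) I.
\]
For every subset $S \subset [n]$ with $|S| = k$ and every $x \in \R^k$, identified with a $k$-sparse vector in $\R^n$ supported on $S$,
\[
x^\top M_{S,S}\, x = \|Ax\|^2 - (1-\delta)\|x\|^2 \ge 0
\]
by the lower RIP bound. So every $k \times k$ principal submatrix of $M$ is PSD, i.e., $M \in \Snk$.

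To lower bound the ratio $\dist_F(M,\psd)/\|M\|_F$: since $A^\top A$ has rank at most $m$, it has at least $n - m$ zero eigenvalues, and hence $M$ has at least $n-m$ eigenvalues equal to $-(1-\delta)$. Using the fact that the squared Frobenius distance from a symmetric matrix to the PSD cone equals the sum of squares of its negative eigenvalues,
\[
\dist_F(M,\psd)^2 \ge (n-m)(1-\delta)^2.
\]
For the Frobenius norm, I expand
\[
\|M\|_F^2 = \|A^\top A\|_F^2 - 2(1-\delta)\,\Tr(A^\top A) + n(1-\delta)^2,
\]
and bound both $\|A^\top A\|_F^2$ and $\Tr(A^\top A)$ via the specific construction of $A$ (for example, for scaled Gaussian entries these are controlled by a moment calculation: in expectation $\Tr(A^\top A) = n$ and $\|A^\top A\|_F^2 \approx n^2/m + n$, with concentration giving essentially the same bound deterministically). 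Substituting $m = \rho n$, $k = rn$, and the relationship between $\rho$, $r$, and $\delta$ imposed by RIP, and then optimizing the choice of $\delta$ algebraically, yields the claimed inequality.

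The main obstacle is matching the explicit constants $93$ and $162$. This requires (i) an RIP existence statement with explicit (non-asymptotic) constants relating $\delta$, $k$, and $m/n$, and (ii) a tight enough bound on $\|A^\top A\|_F^2$, which, unlike the RIP condition itself, concerns the full singular spectrum of $A$ rather than only its action on sparse vectors. For random constructions both ingredients are accessible via standard moment and tail estimates, but carefully balancing them to produce the specific ratio $\sqrt{r-93r^2}/\sqrt{162r+3}$ — in particular identifying the critical threshold $r = 1/93$ where the numerator vanishes — is the most delicate part of the argument.
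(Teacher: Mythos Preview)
Your proposal is essentially the paper's proof: construct $M = A^\top A - (1-\delta)I$ from an RIP matrix $A$, use the lower RIP inequality to place $M$ in $\Snk$, use $\mathrm{rank}(A^\top A)\le m$ to get $\dist_F(M,\psd)\ge(1-\delta)\sqrt{n-m}$, and bound $\|M\|_F$ probabilistically. The specific constants you flag as the obstacle come from fixing $\delta=0.9$, taking $A$ with i.i.d.\ $\pm 1/\sqrt{m}$ entries (so the diagonal of $A^\top A$ is \emph{exactly} $1$, giving $\E\|M\|_F^2 = n\delta^2 + n(n-1)/m$), setting $m=93k$ so that the explicit RIP bound of Baraniuk et al.\ holds with probability $>1/2$, applying Markov to get $\|M\|_F^2\le 2\,\E\|M\|_F^2$ with probability $\ge 1/2$, and union-bounding---no optimization over $\delta$ and no concentration beyond Markov is needed.
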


	For this construction we establish a connection with the \emph{Restricted Isometry Property} (RIP)~\cite{CandesTao1,candes2006stable}, a very important notion in signal processing and recovery~\cite{Candes2008survey,Chartrand2008RIP}.  Roughly speaking, these are matrices that approximately preserve the $\ell_2$ norm of sparse vectors. The details of this connection and the proof of Theorem~\ref{thm:lower2} are provided in Section~\ref{sec:proofThm3}.

\subsection{Achieving the strength of $\mathcal{S}^{n,k}$ by a polynomial number of PSD constraints}

In practice one is unlikely to use the full \closure, since it involves enforcing the PSD-ness for all ${n \choose k} \approx \left(\frac{en}{k}\right)^k$ principal submatrices. Is it possible to achieve the upper bounds mentioned above while enforcing PSD-ness on fewer principal submatrices?  We show that the upper bound given by \eqref{upper1} 
can also be achieved with factor $1+\epsilon$ and probability at least $1-\delta$ by randomly sampling $O\left(\frac{n^2}{\e^2}\ln \frac{n}{\delta}\right)$ of the $k\times k$ principal submatrices. 

\begin{theorem}\label{prop:sampling} Let $2 \leq k \leq n -1$. Consider $\e,\delta>0$ and let $$m :=\frac{12n(n-1)^2}{\e^2 (n-k)^2 k}\ln\frac{2n^2}{\delta} \in O\left(\frac{n^2}{\e^2}\ln \frac{n}{\delta}\right).$$ Let $\cI = (I_1,\ldots,I_m)$ be a sequence of random $k$-sets independently uniformly sampled from ${[n] \choose k}$, and define $\S_{\cI}$ as the set of matrices satisfying the PSD constraints for the principal submatrices indexed by the $I_i$'s, namely
	\begin{align*}
		\S_{\cI} := \{M \in \R^{n \times n} : M_{I_i} \succeq 0,~\forall i \in [m]\}.
	\end{align*}
	Then with probability at least $1 - \delta$ we have 
$$\distb_F(\S_{\cI},\psd) \leq (1+\e)\frac{n -k}{ n + k - 2}.$$
\end{theorem}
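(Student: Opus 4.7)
The plan is to mimic the averaging construction from the proof of Theorem~\ref{thm:upper1}, replacing the full average over all $\binom{n}{k}$ principal submatrices with an empirical average over the $m$ sampled subsets. For any $M \in \S_\cI$ with $\|M\|_F = 1$, let $M^{(I)}$ denote the symmetric matrix that agrees with $M$ on the $I$-principal submatrix and vanishes elsewhere, and set $\hat M := \frac{1}{m}\sum_{\ell=1}^m M^{(I_\ell)}$. Since $M \in \S_\cI$, each $M^{(I_\ell)}$ is PSD, so $\hat M \succeq 0$. The candidate PSD approximation to $M$ will be $\tilde M := a\hat M + b\,\diag(M)$, with nonnegative scalars $a, b$ chosen exactly as in the proof of Theorem~\ref{thm:upper1} (they depend on $d := \|\diag(M)\|_F$); this makes $\tilde M \succeq 0$. (For $\diag(M) \succeq 0$ one needs every index $i \in [n]$ to appear in some sampled $I_\ell$, an event that holds except on a set of probability much smaller than $\delta$ and can be absorbed into the failure probability.)

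I would bound $\|\tilde M - M\|_F$ by the triangle inequality, comparing to the deterministic object $N^\star := a\bar M + b\,\diag(M)$ with $\bar M := \E[\hat M] = \binom{n}{k}^{-1}\sum_I M^{(I)}$:
$$\|\tilde M - M\|_F \;\leq\; a\|\hat M - \bar M\|_F \;+\; \|N^\star - M\|_F.$$
The second term is at most $\frac{n-k}{n+k-2}$ by Theorem~\ref{thm:upper1}. For the first, note that entry-wise $\hat M_{ij} - \bar M_{ij} = (\hat p_{ij} - p_{ij})\,M_{ij}$, where $\hat p_{ij} := \frac{1}{m}|\{\ell : \{i,j\}\subseteq I_\ell\}|$ and $p_{ij} := \Pr[\{i,j\}\subseteq I_1]$. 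A multiplicative Chernoff bound on each $\hat p_{ij}$ (a mean of $m$ i.i.d.~Bernoulli indicators of mean at least $p_o := k(k-1)/(n(n-1))$) together with a union bound over the at most $n^2$ pairs yields, with probability $\geq 1-\delta$, the simultaneous relative bound $|\hat p_{ij} - p_{ij}| \leq \gamma\, p_{ij}$. Because this event depends only on $\cI$ (and not on $M$), the resulting distance estimate applies uniformly to every $M \in \S_\cI$, as required by the supremum defining $\distb_F$.

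The crux of the argument is the calibration of $\gamma$. Under the concentration event, a direct entry-wise computation gives $a\|\hat M - \bar M\|_F \leq \gamma\,\frac{1+(r-1)d^2}{\sqrt{1+(r^2-1)d^2}}$ with $r := (n-1)/(k-1)$, while the deterministic error from Theorem~\ref{thm:upper1} is $\frac{(r-1)d\sqrt{1-d^2}}{\sqrt{1+(r^2-1)d^2}}$. Maximizing the sum over $d \in [0,1]$, the tightest constraint is achieved at $d^2 = 1/(r+1)$ --- exactly the critical value that saturates the deterministic bound of Theorem~\ref{thm:upper1}. At this $d$ the concentration factor collapses to $2\sqrt r/(r+1)$, producing the sufficient condition $\gamma \leq \e(r-1)/(2\sqrt r)$. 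Plugging this into the Chernoff requirement $m \geq 3\ln(2n^2/\delta)/(p_o\gamma^2)$ and using $r-1 = (n-k)/(k-1)$ collapses, after routine algebra, to precisely the $m$ stated in the theorem.

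The main obstacle, and also what makes the sample complexity tight, is this last cancellation. Settling for the naive uniform estimate $a\|\hat M - \bar M\|_F \leq \gamma$ (valid at $d=0$ or $d=1$, where the deterministic error vanishes) would force $\gamma \leq \e\cdot\frac{n-k}{n+k-2}$ everywhere, inflating $m$ by a factor of order $(n+k-2)^2/(k-1)^2$ and ruining the bound for small $k$. Exploiting the fact that the concentration factor is smallest at precisely the $d$ where the deterministic error is largest is the key source of the sharpness in the stated sample complexity.
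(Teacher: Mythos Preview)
Your overall strategy---empirical averaging, triangle inequality against the full average, Chernoff, union bound---is exactly the paper's, and your final sample size $m$ is correct. But the paper's execution is much simpler and sidesteps the $d$-dependent analysis entirely. The paper keeps the \emph{same} fixed scalar $\alpha=\frac{2n(n-1)}{k(n+k-2)}$ from the proof of Theorem~\ref{thm:upper1} (that proof does not use two scalars $a,b$, and nothing there depends on $d$; your description of it is not accurate), and imposes an \emph{additive} concentration requirement $|f_{ij}-\E f_{ij}|\le \e\gamma$ with $\gamma:=\frac{k(n-k)}{2n(n-1)}$. On that event $\|T_\cI(M)-\overtilde{M}\|_F\le \e\gamma\|M\|_F$ uniformly in $M$, and because $\alpha\gamma=\frac{n-k}{n+k-2}$ exactly, the triangle inequality yields $\|M-\alpha T_\cI(M)\|_F\le (1+\e)\frac{n-k}{n+k-2}$ directly---no optimization over $d$ is needed. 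Chernoff for this additive deviation gives precisely the stated $m$.

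Your multiplicative-concentration route with a $d$-dependent optimal $a$ can be made to work, but the assertion that ``the tightest constraint is achieved at $d^2=1/(r+1)$'' is not justified: you are maximizing a sum $f(d)+\gamma g(d)$, and its maximizer need not coincide with that of $f$ alone. For instance $g(1)=1>g(d^*)=2\sqrt{r}/(r+1)$, so when $\e$ is large the $d=1$ constraint is strictly tighter than the one at $d^*$. One can verify your claim in the nontrivial range $\e\le \frac{2(k-1)}{n-k}$, but this requires an argument you have not given. (Incidentally, the optimal $a$ you are implicitly using always satisfies $a\cdot\frac{k}{n}\ge 1$, which forces $b=0$; the $\diag(M)$ correction and the associated coverage issue are therefore unnecessary.)
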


\begin{remark}
	Since the zero matrix is PSD, by definition we always have $\distb_F(\S_{\cI},\psd) \leq 1$. So in order for the bound given by Theorem~\ref{prop:sampling} to be of interest, we need $(1+\e)\frac{n -k}{ n + k - 2} \leq 1$, which means $\e \leq \frac{2k-2}{n-k}$. Plugging this into $m$, we see that 
we need {at least $\frac{3n(n-1)^2}{k(k-1)^2}\ln \frac{2n^2}{\delta}=\tilde{O}(\frac{n^3}{k^3})$ samples to obtain a nontrivial upper bound on the distance}. 
\end{remark}

Recall that a collection $\mathcal{D}$ of $k$-sets of $[n]$ (called blocks) is called a $2$-design (also called a balanced incomplete block design or BIBD) if every pair of elements in $[n]$ belongs to the same number of blocks, denoted $\lambda$. It follows that every element of $[n]$ belongs to the same number of blocks, denoted $r$. Let $b$ be the total number of blocks. The following relation is easily shown by double-counting:
\begin{equation*}
\frac{\lambda}{r}=\frac{k-1}{n-1}.
\end{equation*}
For background on block designs we refer to \cite[Chapters 1 and 2]{MR2029249}.
It immediately follows from the discussion in Sections \ref{sec:upper1}  and \ref{sec:propsampling} that the strength of the bound in \eqref{upper1} can be achieved by the blocks of a $2$-design, instead of using all $k \times k$ submatrices. 

It is known from the work of Wilson \cite[Corollary A and B]{MR366695} that, a $2$-design with $b=n(n-1)$ exists for all sufficiently large values of $n$, although to the best of our knowledge no explicit construction is known. (Wilson's theorem gives a much more general statement for existence of $2$-designs). Therefore, for almost all $n$ we can achieve the strength of bound \eqref{upper1} while only using $n(n-1)$ submatrices.

Fisher's inequality states that $b\geq n$, so we need to enforce PSD-ness of at least $n$ minors if we use a $2$-design. A $2$-design is called \textit{symmetric} if $b=n$. 
Bruck-Ryser-Chowla Theorem gives necessary conditions on $b$, $k$ and $\lambda$, for which a symmetric $2$-designs exist, and this is certainly a limited set of parameters.
Nevertheless, symmetric $2$-designs may be of use in practice, as they give us the full strength of \eqref{upper1} while enforcing PSD-ness of only $n$ $k\times k$ minors. Some important examples of symmetric $2$-designs are finite projective planes (symmetric $2$-designs with $\lambda=1$), biplanes ($\lambda=2$) and Hadamard $2$-designs.

\section{Conclusion and open questions}\label{sec:con}
In this paper, we have been able to provide various upper and lower bounds on $\distb_F(\mathcal{S}^{n,k},\mathcal{S}^n_+)$. In two regimes our bounds on $\distb_F(\mathcal{S}^{n,k},\mathcal{S}^n_+)$ are quite tight. These are: (i) $k$ is small, i.e., $2 \leq k \leq \sqrt{n}$ and (ii) $k$ is quite large, i.e., $k = n - c$ where $c$ is a constant. These are shown in the first two rows of Table~\ref{table:one}. When $k/n$ is a constant, we have also established upper and lower bounds on $\distb_F(\mathcal{S}^{n,k},\mathcal{S}^n_+)$ that are independent of $n$. However, our upper and lower bounds are not quite close when viewed as a function of the ratio $k/n$. Improving these bounds  as a function of this ratio is an important open question.

\begin{center}
\begin{table}[h]
\caption{Bounds on $\distb_F(\mathcal{S}^{n,k},\mathcal{S}^n_+)$ for some regimes}
\label{table:one}
	\begin{tabular}{|c|c|c|}
	\hline
		Regime & Upper bound & Lower bound \\
		\hline
		(small k) $ 2 \leq k \leq \sqrt{n}$ & $\frac{n -k}{n }$ (Simplified from Thm~\ref{thm:upper1})&  $\frac{1}{\sqrt{2}}\frac{n -k}{n}$ (Simplified from Thm~\ref{thm:lower1}) \\
		\hline
		(large k) $k \geq n-c$ & $96 \left(\frac{c}{n} \right)^{3/2}$ (Simplified from Thm~\ref{thm:upper2}) & $\frac{1}{\sqrt{2}} \frac{c}{n^{3/2}}$ (Simplified from Thm~\ref{thm:lower1}) \\
		$(n \geq 97, k \geq 0.75n)$ & &  \\
		\hline
		($k/n$ is a constant) $k = rn$ & Constant, independent of $n$ & Constant, independent of $n$ \\
		($r < \frac{1}{93}$) & $1 - r$ (Simplified from Thm~\ref{thm:upper1}) & $ \sqrt{\frac{r - 93r^2}{5}} $ (Simplified from Thm \ref{thm:lower2}) \\
\hline	
	\end{tabular}
\end{table}
\end{center}

We also showed that instead of selecting all minors, only a polynomial number of randomly selected minors realizes upper bound (\ref{upper1}) within factor $1+\e$ with high probability. An important question in this direction is to deterministically and strategically determine principal submatrices to impose PSD-ness, so as to obtain the best possible bound for (\ref{eq:sparseSDP}) 
As discussed earlier, such questions are related to exploring 2-designs and perhaps further generalizations of results presented in \cite{kim2011exploiting}. 


	\section{Notation and Preliminaries}\label{sec:pre}
	The \emph{support} of a vector is the set of its non-zero coordinates, and we call a vector \emph{$k$-sparse} if its support has size at most $k$. 
	We will use $[n]$ to denote the set $\{1,...,n\}$. A \emph{$k$-set} of a set $A$ is a subset $B\subset A$ with $|B|=k$. Given any vector $x\in\R^n$ and a $k$-set $J\subset [n]$ we define $x_J\in\R^k$ as the vector where we remove the coordinates whose indices are not in $J$. Similarly, for a matrix $M \in \mathbb{R}^{n \times n}$ and a $k$-set $J\subset [n]$, we denote the principal submatrix of $M$ corresponding to the rows and columns in $J$ by $M_J$.
	
	\subsection{Linear algebra}
		
	Given any $n\times n$ matrix $A=[a_{ij}]$ its trace (the sum of its diagonal entries) is denoted as $\Tr(A)$. Recall that $\Tr(A)$ is also equal to the sum of all eigenvalues of $A$, counting multiplicities. Given a symmetric matrix $A$, we use $\lambda_1(A) \geq \lambda_2(A) \ge \dots$ to denote its eigenvalues in non-increasing order. 
	
	We remind the reader that, a real symmetric $n\times n$ matrix $M$ is said to be PSD if $x^{\top} Mx\geq 0$ for all $x\in\R^n$, or equivalently  all of its eigenvalues are non-negative. We also use the notation that $A\succeq B$ if $A-B$ is PSD. 

	We next present the famous Cauchy's Interlace Theorem which will be important for obtaining an upper bound on the number of negative eigenvalues of matrices in $\mathcal{S}^{n,k}$. A proof can be found in \cite{Horn1985matrix}. 
	
	\begin{theorem}[Cauchy's Interlace Theorem]\label{thm:Cauchy}
		
		Consider an $n \times n$ symmetric matrix $A$ and let $A_J$ be any of its $k\times k$ principal submatrix. Then for all $1\leq i\leq k$, 
		$$
		\lambda_{n-k+i}(A)\leq \lambda_i (A_J)\leq \lambda_i (A). 
		$$
	\end{theorem}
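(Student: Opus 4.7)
The plan is to establish Cauchy's Interlace Theorem via the Courant--Fischer min-max characterization of eigenvalues of a symmetric matrix. Specifically, for any symmetric $n \times n$ matrix $A$ with eigenvalues $\lambda_1(A) \geq \cdots \geq \lambda_n(A)$ and any index $1 \leq j \leq n$,
$$\lambda_j(A) = \max_{\substack{V \subseteq \R^n \\ \dim V = j}} \min_{0 \neq x \in V} \frac{x^\top A x}{x^\top x} = \min_{\substack{V \subseteq \R^n \\ \dim V = n - j + 1}} \max_{0 \neq x \in V} \frac{x^\top A x}{x^\top x}.$$
Having both of these equivalent formulas available is the only nontrivial input I need; each inequality in the theorem uses a different one.

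Let $W \subseteq \R^n$ denote the $k$-dimensional coordinate subspace of vectors supported on the index set $J$ of size $k$ defining $A_J$. For any $x \in W$ one has $x^\top A x = x_J^\top A_J x_J$ and $\|x\| = \|x_J\|$, so applying either min-max formula to $A_J$ is equivalent to optimizing the Rayleigh quotient of $A$ over subspaces sitting inside $W \subseteq \R^n$.

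For the upper bound $\lambda_i(A_J) \leq \lambda_i(A)$, I would use the max-min formula: every $i$-dimensional subspace of $W$ is also an $i$-dimensional subspace of $\R^n$, so
$$\lambda_i(A_J) = \max_{\substack{V \subseteq W \\ \dim V = i}} \min_{0 \neq x \in V} \frac{x^\top A x}{x^\top x} \;\leq\; \max_{\substack{V \subseteq \R^n \\ \dim V = i}} \min_{0 \neq x \in V} \frac{x^\top A x}{x^\top x} = \lambda_i(A).$$
For the lower bound $\lambda_{n-k+i}(A) \leq \lambda_i(A_J)$, I would apply the \emph{min-max} formula to $A_J$ with $j=i$, which is a minimum over subspaces of $W$ of dimension $k-i+1$, and observe that
$$\lambda_{n-k+i}(A) = \min_{\substack{V \subseteq \R^n \\ \dim V = n - (n-k+i) + 1}} \max_{0 \neq x \in V} \frac{x^\top A x}{x^\top x} = \min_{\substack{V \subseteq \R^n \\ \dim V = k - i + 1}} \max_{0 \neq x \in V} \frac{x^\top A x}{x^\top x}.$$
Since the latter minimum ranges over a superset of the feasible subspaces defining $\lambda_i(A_J)$, we obtain $\lambda_{n-k+i}(A) \leq \lambda_i(A_J)$.

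The main conceptual point, and the only mildly error-prone step, is aligning the dimension $k-i+1$ on both sides in the lower bound; once that bookkeeping is done, both inequalities reduce to one-line invocations of Courant--Fischer. No further ingredients beyond the min-max theorem are needed, which is why this result is usually stated with only a reference to a standard matrix-analysis text such as Horn and Johnson.
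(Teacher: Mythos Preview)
Your proof via the Courant--Fischer min-max principle is correct and is the standard argument; the dimension bookkeeping in the lower bound is done correctly ($n-(n-k+i)+1 = k-i+1$). Note that the paper does not actually supply a proof of this theorem: it is stated as a preliminary and attributed to \cite{Horn1985matrix}, where the proof given is exactly the Courant--Fischer argument you outline.
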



	

	
	\subsection{Probability}

	These following concentration inequalities will be used throughout, and can be found in \cite{concentration}. 
	
	\begin{theorem}[Markov's Inequality]
		Let $X$ be a non-negative random variable. Then for all $a \ge 1$,
		
		$$
		\Pr(X\ge a\mathbb{E}(X))\le \frac{1}{a}. 
		$$
	\end{theorem}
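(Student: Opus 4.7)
The plan is to derive Markov's inequality by controlling $X$ from below using the indicator of the tail event and then taking expectations; everything follows from linearity and a single pointwise inequality.

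First I would fix $a \geq 1$, set $E = \{X \geq a\, \mathbb{E}(X)\}$, and let $\mathbf{1}_E$ denote its indicator random variable. The key step is to verify the pointwise inequality
\[
X \;\geq\; a\, \mathbb{E}(X)\cdot \mathbf{1}_E,
\]
which I would establish by splitting on whether the outcome lies in $E$ or not: on $E$ the inequality is precisely the defining condition of the event, and on $E^c$ the right-hand side vanishes while $X \geq 0$ by hypothesis.

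Next I would take expectations of both sides. By monotonicity and linearity of expectation, together with the identity $\mathbb{E}[\mathbf{1}_E] = \Pr(E)$, this produces $\mathbb{E}(X) \geq a\, \mathbb{E}(X)\cdot \Pr(E)$. Assuming $\mathbb{E}(X) > 0$, I would divide both sides by $a\, \mathbb{E}(X)$ to obtain $\Pr(E) \leq 1/a$, which is the claim. The degenerate case $\mathbb{E}(X) = 0$ needs to be handled separately: non-negativity of $X$ then forces $X = 0$ almost surely, so the only nontrivial interpretation of the bound again holds.

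I do not expect any real obstacle here. The proof has essentially no moving parts beyond the pointwise bound $X \geq a\, \mathbb{E}(X)\cdot \mathbf{1}_E$; the only mild subtlety is the boundary case $\mathbb{E}(X)=0$, which is dispatched by an almost-sure argument.
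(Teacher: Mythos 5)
The paper does not prove Markov's inequality at all: it is stated in the Preliminaries as a cited background fact (from \cite{concentration}), so there is no ``paper's own proof'' to compare against. Your argument is the standard textbook proof via the pointwise bound $X \ge a\,\mathbb{E}(X)\cdot \mathbf{1}_E$ followed by taking expectations, and the main case $\mathbb{E}(X)>0$ is handled correctly and cleanly.

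One small caution about your treatment of the degenerate case: if $\mathbb{E}(X)=0$ then $X=0$ almost surely, so $\Pr\big(X \ge a\,\mathbb{E}(X)\big) = \Pr(X \ge 0) = 1$, which is \emph{not} $\le 1/a$ when $a>1$. So the inequality as literally stated in the paper actually fails there, and the remark that ``the only nontrivial interpretation of the bound again holds'' papers over a genuine (if well-known) defect of this normalized form of Markov's inequality; it is not something your argument can repair. The usual fixes are to assume $\mathbb{E}(X)>0$, or to state the bound with strict inequality $\Pr\big(X > a\,\mathbb{E}(X)\big)\le 1/a$, or to use the unnormalized form $\Pr(X\ge t)\le \mathbb{E}(X)/t$ for $t>0$. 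This is a flaw in the theorem statement rather than in your proof strategy, and it has no bearing on how the inequality is used elsewhere in the paper (where the relevant expectations are strictly positive).
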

	
	\begin{theorem}[Chebyshev's Inequality] Let $X$ be a random variable with finite mean and variance. Then for all $a>0$, $$\Pr(|X - \mathbb{E}(X)| \geq a) \leq \frac{\var(X)}{a^2}. $$
	\end{theorem}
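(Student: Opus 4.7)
The plan is to derive Chebyshev's inequality as a direct consequence of Markov's inequality (stated just above in the excerpt) by applying it to the non-negative random variable $Y := (X - \E(X))^2$. The motivation is that $Y$ encodes the two-sided deviation $|X - \E(X)|$ as a one-sided tail of a non-negative quantity, which is precisely the setting for Markov.

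Concretely, first I would note that $\E(Y) = \var(X)$ by definition of variance, and this is finite by hypothesis. Next I would observe the event equality $\{|X - \E(X)| \ge a\} = \{(X - \E(X))^2 \ge a^2\} = \{Y \ge a^2\}$, valid for $a > 0$ because squaring is monotone on non-negative reals. Then I would apply Markov's inequality to $Y$ at threshold $a^2$ to obtain $\Pr(Y \ge a^2) \le \E(Y)/a^2 = \var(X)/a^2$, which yields exactly the desired bound.

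The only mildly non-routine point is that Markov's inequality as stated in the excerpt is parametrized multiplicatively, $\Pr(X \ge c\,\E(X)) \le 1/c$ for $c \ge 1$, rather than additively. To invoke it I would set $c := a^2/\E(Y) = a^2/\var(X)$; this requires $c \ge 1$, i.e., $a^2 \ge \var(X)$. When this fails, the desired bound $\var(X)/a^2$ exceeds $1$ and is therefore automatic since the left-hand side is a probability. The degenerate case $\var(X) = 0$ also needs a brief mention: then $X = \E(X)$ almost surely and the probability on the left is $0$. I do not expect any real obstacle here; the entire argument is essentially a one-line reduction, and the bookkeeping above dispatches the remaining edge cases.
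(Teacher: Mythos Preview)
Your proposal is correct and is the standard derivation of Chebyshev's inequality from Markov's inequality; you have also been careful to reconcile the multiplicative form of Markov stated in the paper with the additive threshold needed here, and to handle the edge cases $a^2 < \var(X)$ and $\var(X)=0$. Note, however, that the paper does not actually give its own proof of this statement: it is listed among the preliminary concentration inequalities with a citation to \cite{concentration}, so there is nothing to compare against.
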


	\begin{theorem}[Chernoff Bound]
		Let $X_1,...,X_n$ be i.i.d. Bernoulli random variables, with $\Pr(X_i=1)=\mathbb{E}(X_i)=p$ for all $i$. Let $X=\sum_{i=1}^n X_i$ and $\mu=\mathbb{E}(X)=np$. Then for any $0<\delta<1$, 
		
		$$
		\Pr\big(|X-\mu|>\delta \mu\big)\leq 2\exp\bigg(-\frac{\mu\delta^2}{3}\bigg). 
		$$
	\end{theorem}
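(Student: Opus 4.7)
The plan is to adapt the averaging construction from the proof of Theorem~\ref{thm:upper1} so that it uses only the $m$ sampled principal submatrices. Given $M \in \S_\cI$ with $\|M\|_F = 1$, I would define
$$\widetilde{M} := \alpha' \sum_{i=1}^m M^{(I_i)},$$
where $M^{(I)}$ denotes the $n \times n$ matrix obtained from $M$ by zeroing out all entries outside the block $I \times I$, and $\alpha' > 0$ is the deterministic constant $\alpha' := \frac{2\binom{n}{k}}{m\bigl(\binom{n-1}{k-1} + \binom{n-2}{k-2}\bigr)}$. Since $M_{I_i}\succeq 0$ by definition of $\S_\cI$, each $M^{(I_i)}$ is PSD, hence $\widetilde M \in \psd$. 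Entry-wise, $(\widetilde M)_{jl} = \alpha' N_{jl}\, M_{jl}$, where $N_{jl} := |\{i : j,l \in I_i\}|$ is a sum of $m$ i.i.d.\ Bernoulli variables with mean $k/n$ if $j=l$ and $k(k-1)/(n(n-1))$ otherwise. This choice of $\alpha'$ is calibrated so that $\E[\widetilde{M}]$ coincides exactly with the PSD matrix produced in the proof of Theorem~\ref{thm:upper1}, so by that theorem $\|M - \E[\widetilde M]\|_F \leq \frac{n-k}{n+k-2}$.

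By the triangle inequality, the proof reduces to showing that with probability at least $1 - \delta$,
$$\|\widetilde M - \E[\widetilde M]\|_F \leq \e \cdot \tfrac{n-k}{n+k-2}$$
holds simultaneously for every unit-norm $M \in \S_\cI$. The key structural observation enabling uniformity in $M$ is
$$(\widetilde M - \E[\widetilde M])_{jl} = \alpha'(N_{jl} - \E N_{jl})\, M_{jl},$$
i.e.\ the deviation is a coordinate-wise rescaling of $M$ by random factors that depend only on $\cI$. This yields $\|\widetilde M - \E[\widetilde M]\|_F \leq \bigl(\max_{j,l} \alpha'|N_{jl}-\E N_{jl}|\bigr)\,\|M\|_F$, so it suffices to control the scalar $\max_{j,l} \alpha'|N_{jl}-\E N_{jl}|$ as a function of $\cI$ alone.

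A short calculation gives $\alpha'\,\E N_{jj} = 1 + \tfrac{n-k}{n+k-2}$ and $\alpha'\,\E N_{jl} = 1 - \tfrac{n-k}{n+k-2}$ for $j \neq l$, so the desired per-entry bound translates into the multiplicative deviations $|N_{jj}-\E N_{jj}| \leq \tfrac{\e(n-k)}{2(n-1)}\,\E N_{jj}$ on the diagonal and $|N_{jl}-\E N_{jl}| \leq \tfrac{\e(n-k)}{2(k-1)}\,\E N_{jl}$ off the diagonal. Since each $N_{jl}$ is a sum of $m$ i.i.d.\ Bernoullis, the Chernoff bound from Section~\ref{sec:pre} applies; plugging in the stated value of $m$, a direct substitution shows that the per-entry failure probability is at most $\delta/n^2$ in both cases. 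A union bound over the at most $n^2$ entries then gives the claim.

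The main obstacle is uniformity over the infinite family $\S_\cI$: for a \emph{fixed} $M$, concentration is immediate, but we need a single good event on $\cI$ that simultaneously controls the distance for every unit-norm $M \in \S_\cI$. This is resolved precisely because $\widetilde M$ is a linear function of $M$ whose coefficients depend only on $\cI$, so a single concentration event on the counts $N_{jl}$ automatically covers every $M$. The remaining work is bookkeeping: choosing $\alpha'$ so that $\E[\widetilde M]$ reproduces the Theorem~\ref{thm:upper1} matrix, and verifying that the multiplicative Chernoff slack translates cleanly into an additive Frobenius error of $\e\,\tfrac{n-k}{n+k-2}$.
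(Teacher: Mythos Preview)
Your proposal does not address the stated theorem. The statement you were asked to prove is the Chernoff bound itself---a standard concentration inequality that the paper lists in its preliminaries and does not prove (it is cited from a reference). What you have written is instead a proof sketch for Theorem~\ref{prop:sampling}, the result about approximating the \closure{} via a polynomial number of randomly sampled principal submatrices. Indeed, your argument \emph{invokes} the Chernoff bound (``the Chernoff bound from Section~\ref{sec:pre} applies'') rather than establishing it.

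As a side remark: had the target been Theorem~\ref{prop:sampling}, your approach would be essentially the same as the paper's. Both define the partial average $\frac{1}{m}\sum_i M^{I_i}$ (rescaled), observe that it is PSD, compare it to the full average $\overtilde{M}$ from Theorem~\ref{thm:upper1} via the triangle inequality, note that the error is an entrywise rescaling of $M$ by factors depending only on the counts $N_{jl}$ (your notation) or fractions $f_{ij}$ (the paper's), and then control those counts uniformly via Chernoff plus a union bound over the $O(n^2)$ entries. The only cosmetic difference is that you fold the scaling $\alpha$ into the averaging operator from the start, whereas the paper keeps the operator $T_\cI$ unscaled and multiplies by $\alpha$ afterward.
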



\section{Proof of Theorem~\ref{thm:upper1}: Averaging operator}\label{sec:upper1}

	Consider a matrix $M$ in the \closure $\S^{n,k}$ with $\|M\|_F = 1$. To upper bound its distance to the PSD cone we transform $M$ into a ``close by'' PSD matrix $\overtilde{M}$. 
	
	The idea is clear: since all $k \times k$ principal submatrices of $M$ are PSD, we define $\overtilde{M}$ as the average of these minors. More precisely, for a set $I \subseteq [n]$ of $k$ indices, let $M^I$ be the matrix where we zero out all the rows and columns of $M$ except those indexed by indices in $I$; then $\overtilde{M}$ is the average of all such matrices:
	\begin{align*}
		\overtilde{M} := \frac{1}{{n \choose k}} \sum_{I \in {[n] \choose k}} M^I.
	\end{align*} 
	Notice that indeed since the principal submatrix $M_I$ is PSD, $M^I$ is PSD as well: for all vectors $x \in \R^n$, $x^\top M^I x = x_I M_I x_I \ge 0$. 
	Since the average of PSD matrices is also PSD, we have that $\overtilde{M}$ is PSD, as desired.    
	
	Moreover, notice that the entries of $\overtilde{M}$ are just scalings of the entries of $M$, depending on how many terms of the average it is not zeroed out:
	
\begin{enumerate}
\item \textbf{Diagonal terms:} These are scaled by the factor $$\frac{{n \choose k}  - {n -1 \choose k}  }{{n \choose k}} = \frac{k}{n},$$ that is, $\overtilde{M}_{ii} = \frac{k}{n}M_{ii}$ for all $i \in [n]$.

\vspace{6pt}
\item \textbf{Off-diagonal terms:} These are scaled by the factor $$\frac{{n \choose k}  - (2{n - 1 \choose k}  - {n - 2 \choose k}) }{{n \choose k}} = \frac{k(k-1)}{n(n-1)},$$ that is, $\overtilde{M}_{ij} = \frac{k(k-1)}{n(n-1)}M_{ij}$ for all $i \neq j$.
\end{enumerate}

	To even out these factors, we define the scaling $\alpha := \frac{2n(n-1)}{k (n + k -2)}$ and consider $\alpha \overtilde{M}$. Now we have that the difference between $M$ and $\alpha \overtilde{M}$ is a uniform scaling (up to sign) of $M$ itself: $(M - \alpha \overtilde{M})_{ii} = (1 - \alpha \frac{k}{n})\, M_{ii} = -\frac{n -k}{ n + k - 2}\, M_{ii}$, and $(M - \alpha \overtilde{M})_{ij} = (1-\alpha \frac{k(k-1)}{n(n -1)})\,M_{ij} = \frac{n -k}{ n + k - 2}\, M_{ij}$ for $i\neq j$. Therefore, we have 
	\begin{align*}
	\textup{dist}_{F}(M, \psd) ~\leq~ \|M - \alpha \overtilde{M}\|_F ~=~ \frac{n -k}{ n + k - 2}\,\|M\|_F ~=~ \frac{n -k}{ n + k - 2}.
	\end{align*}
	Since this holds for all unit-norm matrix $M \in \S^{n,k}$, this upper bound also holds for $\distb_F(\S^{n,k},\psd)$. This concludes the proof of Theorem \ref{thm:upper1}.


\section{Proof of Theorem~\ref{thm:upper2}: Randomized sparsification}\label{Sec:proofThm2}

	Let $M \in \S^{n,k}$ be a matrix in the \closure with $\|M\|_F = 1$. To prove Theorem \ref{thm:upper2}, we show that the Frobenius distance from $M$ to the PSD cone is at most $O\big((\frac{n-k}{n})^{3/2}\big)$. 
	We assume that $M$ is not PSD, otherwise we are done, and hence it has a negative eigenvalue. We write $M$ in terms of its eigendecomposition: Let $-\lambda_1 \le -\lambda_2 \le \ldots \le -\lambda_\ell$ and $\mu_1,\ldots,\mu_{n-\ell}$ be the negative and non-negative eigenvalues of $M$, and let $v^1,\ldots,v^{\ell} \in \R^n$ and $w^1,\ldots,w^{n-\ell} \in \R^n$ be orthonormal eigenvectors relative to these eigenvalues. Thus 
	\begin{equation}\label{eq:decomp}
	M=- \sum_{i \le \ell} \lambda_i v^i (v^i)^\top + \sum_{i \le n-\ell} \mu_i w^i (w^i)^\top.
	\end{equation}
	Notice that since $\|M\|_F = 1$ we have

	\begin{align}
	\sum_{i \le \ell} \lambda_i^2 + \sum_{i \le n-\ell} \mu_i^2 = 1. \label{eq:normEigen}
	\end{align}
	
	We first relate the distance from $M$ to the PSD cone to its negative eigenvalues. 
	
	\subsection{Distance to PSD cone and negative eigenvalues.}
	
	We start with the following general observation. 
	
	\begin{proposition}\label{prop:neval}
		Suppose $M$ is a symmetric $n\times n$ matrix with $\ell\leq n$ negative eigenvalues. Let $-\lambda_1\leq -\lambda_2\leq ...\leq -\lambda_\ell < 0$ and $\mu_1,...,\mu_{n-l}\geq 0$ be the negative and non-negative eigenvalues of $M$. Then
		$$
		\dist_F(M,\psd)=\sqrt{\sum_{i=1}^\ell \lambda_i^2}. 
		$$
	\end{proposition}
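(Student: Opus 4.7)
The plan is to prove both inequalities $\dist_F(M,\psd)\le\sqrt{\sum_i\lambda_i^2}$ and $\dist_F(M,\psd)\ge\sqrt{\sum_i\lambda_i^2}$ directly from the spectral decomposition of $M$.

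For the upper bound, I would simply exhibit a candidate PSD matrix achieving this distance. Since $\{v^i\}\cup\{w^j\}$ is an orthonormal basis of $\R^n$, the natural choice is to keep only the non-negative spectral part, i.e.\ take
\[
N^\star:=\sum_{i\le n-\ell}\mu_i w^i(w^i)^\top,
\]
which is PSD by construction. Then $M-N^\star=-\sum_{i\le\ell}\lambda_i v^i(v^i)^\top$, and because the $v^i$'s are orthonormal, a direct expansion of $\|M-N^\star\|_F^2=\Tr((M-N^\star)^2)$ collapses to $\sum_{i\le\ell}\lambda_i^2$. This yields $\dist_F(M,\psd)\le\sqrt{\sum_i\lambda_i^2}$.

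For the lower bound, let $V\subseteq\R^n$ be the span of the negative eigenvectors $v^1,\dots,v^\ell$ and let $P$ denote the orthogonal projection onto $V$. Since $P$ has operator norm $1$, for any matrix $A$ we have $\|PAP\|_F\le\|A\|_F$, so for any PSD matrix $N$,
\[
\|M-N\|_F\ \ge\ \|P(M-N)P\|_F\ =\ \|PMP-PNP\|_F.
\]
The key step is to analyze these two projected matrices as operators on $V$: $PMP$ restricted to $V$ equals $-\sum_i\lambda_i v^i(v^i)^\top$, which is negative semidefinite with $\|PMP\|_F^2=\sum_i\lambda_i^2$, while $PNP$ restricted to $V$ is PSD (as $x^\top PNP x=(Px)^\top N(Px)\ge0$). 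Expanding the Frobenius norm,
\[
\|PMP-PNP\|_F^2=\|PMP\|_F^2-2\ip{PMP}{PNP}+\|PNP\|_F^2.
\]
The inner product $\ip{PMP}{PNP}$ is non-positive because the Frobenius inner product of a negative semidefinite matrix with a PSD matrix is $\le0$ (this follows from $\ip{A}{B}=\Tr(AB)$ together with $\Tr(B^{1/2}AB^{1/2})\le 0$ when $A\preceq 0$). Hence the above quantity is at least $\|PMP\|_F^2=\sum_i\lambda_i^2$, giving the matching lower bound.

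There is no serious obstacle; the only point requiring care is the sign of $\ip{PMP}{PNP}$, which I would justify in one line via the trace identity above. Putting the two bounds together yields the claimed equality, and in fact identifies $N^\star$ as a nearest PSD matrix to $M$.
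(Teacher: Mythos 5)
Your argument is correct. The upper bound is identical to the paper's: in both cases the candidate nearest PSD matrix is the non-negative spectral part of $M$, and a direct expansion gives $\|M-N^\star\|_F^2 = \sum_i \lambda_i^2$. Your lower bound, however, is packaged differently. The paper first reduces to a diagonal matrix $D$ by orthogonal conjugation (using invariance of $\|\cdot\|_F$), and then expands $\|D-N\|_F^2$ coordinate-wise, keeping only the terms $\sum_{i\le\ell}(N_{ii}+\lambda_i)^2$ and invoking $N_{ii}\ge 0$. You instead stay basis-free: you compress both $M$ and $N$ by the orthogonal projection $P$ onto the negative eigenspace, use $\|P(M-N)P\|_F\le\|M-N\|_F$, and then the trace inequality $\Tr(AB)\le 0$ for $A\preceq 0$, $B\succeq 0$ to discard the cross term. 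Unpacked in the eigenbasis, the sign of $\ip{PMP}{PNP}$ is exactly the paper's observation that $N_{ii}\ge 0$, so the two routes are close cousins; yours has the mild advantage of not requiring the diagonalization step and of isolating the two standard facts (Frobenius contraction under projections, trace positivity for PSD pairs) cleanly, while the paper's is more elementary and self-contained. Both are complete and rigorous, and both identify the same minimizer $N^\star$.
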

	\begin{proof}
		Let $V$ be the orthonormal matrix that diagonalizes $M$, i.e., $$V^{\top} MV=D :=\diag(-\lambda_1,...,-\lambda_\ell,\mu_1,...,\mu_{n-\ell}).$$ It is well-known that the Frobenius norm is invariant under orthonormal transformation. Therefore, for any $N
		\in\psd$ we have
		$$
		\dist_F(M,N)~=~\|M-N\|_F~=~\|V^{\top }(M-N)V\|_F ~=~ \dist_F(D,\,V^{\top}NV).
		$$ 
		Since $N\in \psd$ iff $V^{\top}NV\in \psd$, we see that $\dist_F(M,\psd)=\dist_F(D,\psd)$. So we only need to show that the latter is $\sqrt{\sum_{i=1}^\ell \lambda_i^2}$. 
		
		Let $D_+=\diag(0,...,0,\mu_1,...,\mu_{n-\ell})$ be obtained from $D$ by making all negative eigenvalues zero. Then $$\|D-D_+\|_F=\sqrt{\sum_{i=1}^n\sum_{i=1}^n (D-D_+)_{ij}^2}=\sqrt{\sum_{i=1}^\ell \lambda_i^2}.$$
		It then suffices to show that $D^+$ is the PSD matrix closest to $D$. For that, let $N$ be any PSD matrix. Then $N_{ii}=e_i^{\top} N e_i\geq 0$ for all $i$, where $e_i$ is the standard unit vector on $i^{th}$ coordinate. Thus we have
		\begin{align*}
		\|D-N\|_F&~=~\sqrt{\sum_{i=1}^\ell (N_{ii}+\lambda_{i})^2+\sum_{i=\ell+1}^{n} (\mu_{i-\ell}-N_{ii})^2+\sum_{i=1}^n\sum_{j\neq i}N_{ij}^2} ~\geq~ \sqrt{\sum_{i=1}^\ell \lambda_i^2}. 
		\end{align*}
		This concludes the proof. 
	\end{proof}
	
	In addition, Cauchy's Interlace Theorem gives an upper bound on the number of negative eigenvalues of matrices in $\Snk$. 
	
	\begin{proposition}\label{cor:nevals}
		Any $A\in \mathcal{S}^{n,k}$ has at most $n-k$ negative eigenvalues. 
	\end{proposition}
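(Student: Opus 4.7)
The plan is to apply Cauchy's Interlace Theorem (Theorem~\ref{thm:Cauchy}) to a single $k\times k$ principal submatrix of $A$, using the fact that this submatrix is PSD by the definition of $\mathcal{S}^{n,k}$.

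Concretely, I would fix any $k$-subset $J \subseteq [n]$ and consider the principal submatrix $A_J$. Since $A \in \mathcal{S}^{n,k}$, the matrix $A_J$ is PSD, so all its eigenvalues are nonnegative; in particular the smallest one satisfies $\lambda_k(A_J) \geq 0$. Next I invoke the right-hand inequality in Cauchy's Interlace Theorem with $i = k$, which yields $\lambda_k(A_J) \leq \lambda_k(A)$. Chaining these two inequalities gives $\lambda_k(A) \geq 0$.

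Since the eigenvalues are indexed in non-increasing order, the bound $\lambda_k(A) \geq 0$ forces $\lambda_1(A) \geq \lambda_2(A) \geq \cdots \geq \lambda_k(A) \geq 0$, i.e., $A$ has at least $k$ nonnegative eigenvalues. Counting multiplicities, this leaves at most $n - k$ negative eigenvalues, which is the desired conclusion.

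There is no real obstacle here: the claim follows immediately from Cauchy's Interlace Theorem once one picks the correct index $i = k$, so the ``hard part'' is only recognizing the right instantiation. Note that it is not necessary to consider more than one submatrix $A_J$: any single $k\times k$ principal submatrix already controls $\lambda_k(A)$ through the interlacing inequality, because the inequality $\lambda_k(A_J) \leq \lambda_k(A)$ holds uniformly in $J$.
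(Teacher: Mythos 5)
Your proof is correct and follows exactly the same route as the paper: fix one $k\times k$ principal submatrix $A_J$, use PSD-ness to get $\lambda_k(A_J)\ge 0$, and then invoke Cauchy's Interlace Theorem (at index $i=k$) to conclude $\lambda_k(A)\ge 0$, hence at most $n-k$ negative eigenvalues. The only difference is that you spell out the instantiation $i=k$ and the inequality $\lambda_k(A_J)\le\lambda_k(A)$ explicitly, which the paper leaves implicit.
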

	\begin{proof}
		Let $J$ be any $k$-subset of $[n]$. Since $A\in\mathcal{S}^{n,k}$ we have that $A_J$ is PSD, so in particular $\lambda_k({A_J})\geq 0$. Thus, by Theorem~\ref{thm:Cauchy} the original matrix $A$ also has $\lambda_k(A)\geq 0$, and so the first $k$ eigenvalues of $A$ are nonnegative. 
	\end{proof}

	Using Proposition~\ref{prop:neval} and Proposition~\ref{cor:nevals}, given any symmetric matrix $M\in\Snk$ we can get an upper bound on $\dist_F(M,\psd)$ using its smallest eigenvalue. 
	\begin{proposition}\label{prop:ub:smallev}
		Consider a matrix $M\in\Snk$ with smallest eigenvalue $-\lambda_1 < 0$. Then
		$$
		\dist_F(M,\psd)\leq\sqrt{n-k} \cdot\lambda_1. 
		$$
	\end{proposition}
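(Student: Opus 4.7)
The plan is to directly combine the two preceding propositions. By Proposition~\ref{prop:neval}, if $M$ has negative eigenvalues $-\lambda_1 \le -\lambda_2 \le \cdots \le -\lambda_\ell < 0$ (with $-\lambda_1$ the smallest), then
\[
\dist_F(M,\psd) = \sqrt{\sum_{i=1}^{\ell} \lambda_i^2}.
\]
So I need to control this sum using only $\lambda_1$ and the ambient parameters $n,k$.

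First I would bound the number of terms: since $M \in \Snk$, Proposition~\ref{cor:nevals} gives $\ell \le n-k$. Next I would bound each term: by the ordering $-\lambda_1 \le -\lambda_i$ for every $1 \le i \le \ell$, equivalently $\lambda_i \le \lambda_1$, so $\lambda_i^2 \le \lambda_1^2$ for all such $i$.

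Combining these two observations,
\[
\sum_{i=1}^{\ell} \lambda_i^2 \;\le\; \ell\,\lambda_1^2 \;\le\; (n-k)\,\lambda_1^2,
\]
and taking square roots yields $\dist_F(M,\psd) \le \sqrt{n-k}\cdot \lambda_1$, which is the claim.

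There is no real obstacle here; the statement is essentially a corollary packaging the two previous propositions, and the only tiny subtlety is to remember that $-\lambda_1$ is the \emph{smallest} (most negative) eigenvalue, so that $\lambda_1$ dominates the absolute values of all other negative eigenvalues. The case $\ell = 0$ is vacuous since the hypothesis assumes $-\lambda_1 < 0$, so $M$ has at least one negative eigenvalue.
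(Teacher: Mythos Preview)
Your proof is correct and follows essentially the same approach as the paper: apply Proposition~\ref{prop:neval} to write $\dist_F(M,\psd)=\sqrt{\sum_{i=1}^\ell \lambda_i^2}$, bound each $\lambda_i$ by $\lambda_1$, and then use Proposition~\ref{cor:nevals} to bound $\ell\le n-k$.
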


	\begin{proof}
		Letting $-\lambda_1,\ldots,-\lambda_\ell$ be the negative eigenvalues of $M$, we have from Proposition~\ref{prop:neval} that $\dist_F(M,\psd)=\sqrt{\sum_{i=1}^\ell \lambda_i^2} \le \sqrt{\ell}\cdot  \lambda_1$, since $-\lambda_1$ is the smallest eigenvalue. Since $\ell \le n-k$, because of Proposition~\ref{cor:nevals}, we obtain the result.
	\end{proof}
		

	\subsection{Upper bounding $\lambda_1$}

	Given the previous proposition, fix throughout this section a (non PSD) matrix $M \in \Snk$ with smallest eigenvalue $- \lambda_1 < 0$. Our goal is to upper bound $\lambda_1$.

 The first observation is the following: Consider a symmetric matrix $A$ and a set of coordinates $I \subseteq [n]$, and notice that for every vector $x \in \R^n$ supported in $I$ we have $x^\top A x = x_I^\top A_I x_I$. Thus, the principal submatrix $A_I$ is PSD iff for all vectors $x \in \R^n$ supported in $I$ we have $x^\top A x \ge 0$. Applying this to all principal submatrices gives a characterization of the \closure via $k$-sparse test vectors. 
	
	\begin{obs} \label{obs:sparse}
		A symmetric real matrix $A$ belongs to $\S^{n,k}$ iff for all $k$-sparse vectors $x \in \R^n$ we have $x^\top A x \ge 0$. 
	\end{obs}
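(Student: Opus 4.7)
The plan is to prove both directions of the equivalence directly from the definition of $\S^{n,k}$, exploiting the simple identity already highlighted in the paragraph preceding the observation: for any symmetric $A$ and any vector $x \in \R^n$ supported on a set $I \subseteq [n]$, one has $x^\top A x = x_I^\top A_I x_I$. This identity is the only real ingredient; everything else is bookkeeping about supports and $k$-sets.

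For the forward direction, I would assume $A \in \S^{n,k}$ and pick an arbitrary $k$-sparse vector $x$. Let $I_0 \subseteq [n]$ be its support, so $|I_0| \le k$. Extend $I_0$ to a $k$-set $I \subseteq [n]$ arbitrarily (padding with any unused indices); then $x$ is still supported in $I$, so $x^\top A x = x_I^\top A_I x_I$. Since $A_I$ is a $k \times k$ principal submatrix and $A \in \S^{n,k}$, $A_I$ is PSD, giving $x^\top A x \ge 0$.

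For the converse, I would assume $x^\top A x \ge 0$ for all $k$-sparse $x$, and show each $k \times k$ principal submatrix $A_J$ is PSD. Fix any $k$-set $J \subseteq [n]$ and any $y \in \R^k$, and define $x \in \R^n$ by setting $x_J := y$ and zero elsewhere; then $x$ is $k$-sparse, supported in $J$, so $y^\top A_J y = x_J^\top A_J x_J = x^\top A x \ge 0$. Since $y$ was arbitrary, $A_J \succeq 0$, and since $J$ was an arbitrary $k$-set, $A \in \S^{n,k}$.

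There is essentially no obstacle here: the statement is a reformulation of the definition once one notices that a $k$-sparse vector in $\R^n$ corresponds bijectively (after zero-padding) to a vector in $\R^k$ indexed by some $k$-set, together with the quadratic-form identity above. The only mild subtlety is remembering that ``$k$-sparse'' in this paper allows supports of size strictly less than $k$, which is handled cleanly in the forward direction by padding the support out to a full $k$-set before invoking the PSD hypothesis on $A_I$.
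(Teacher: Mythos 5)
Your proof is correct and uses exactly the same key identity the paper relies on (namely $x^\top A x = x_I^\top A_I x_I$ for $x$ supported in $I$), just spelled out as two explicit implications with the minor padding step made precise. This matches the paper's argument, which presents the same reasoning in the paragraph immediately preceding the observation.
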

		
	Using this characterization, and the fact that $M \in \S^{n,k}$, the idea to upper bound $\lambda_1$ is to find a  vector $\bar{v}$ with the following properties (informally):
	\begin{enumerate}
		\item $\bar{v}$ is $k$-sparse
		\item $\bar{v}$ is similar to the eigenvector $v^1$ relative to $\lambda_1$
		\item $\bar{v}$ is almost orthogonal to the eigenvectors of $M$ relative to its non-negative eigenvalues.
	\end{enumerate}
	Such vector gives a bound on $\lambda_1$ because using the eigendecomposition \eqref{eq:decomp}
	\begin{align*}
		0 \stackrel{\textrm{Obs \ref{obs:sparse}}}{\le} \bar{v}^\top M \bar{v} = - \sum_{i \le \ell} \lambda_i \, \ip{v^i}{\bar{v}}^2 + \sum_{i \le n-\ell} \mu_i \, \ip{w^i}{\bar{v}}^2 \lesssim -\lambda_1 + \textrm{``small error''},
	\end{align*}
	and hence $\lambda_1 \lesssim \textrm{``small error''}$.
	
	\medskip
	
	We show the existence of such $k$-sparse vector $\bar{v}$ via the probabilistic method by considering a random sparsification of $v^1$. More precisely, define the random vector $V \in \R^n$ as follows: in hindsight set $p:= 1 - \frac{2 (n-k)}{n}$, and let $V$ have independent entries satisfying
	$$
	V_i=\begin{cases}
	v^1_i & \text{ if }(v^1_i)^2>2/n, \\
	\frac{v^1_i}{p} \text{ with probability } p & \text{ if } (v^1_i)^2\leq \frac{2}{n}, \\
	0 \text{ with probability } 1-p & \text{ if } (v^1_i)^2\leq \frac{2}{n}.
	\end{cases}
	$$
	

	The choice of $p$ guarantees that $V$ is $k$-sparse with good probability.
	
	\begin{lemma} \label{lemma:sparse}
		$V$ is $k$-sparse with probability at least $\frac{1}{2}$.
	\end{lemma}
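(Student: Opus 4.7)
My plan is to compute the expected size of $\mathrm{supp}(V)$, show that it lies strictly below $k$, and then invoke the classical fact that the median of a binomial random variable is within one of its mean.

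I first partition the coordinates into the ``large'' set $B := \{i \in [n] : (v^1_i)^2 > 2/n\}$ (on which $V_i = v^1_i$ deterministically) and the complementary ``small'' set $S := [n] \setminus B$ (on which the coordinates of $V$ are kept independently with probability $p$). Because $v^1$ is a unit eigenvector,
\[
1 \;=\; \|v^1\|_2^2 \;\ge\; \sum_{i\in B}(v^1_i)^2 \;>\; \tfrac{2}{n}\,|B|,
\]
giving the key bound $|B| < n/2$.

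Next, I observe that pointwise $|\mathrm{supp}(V)| \le |B| + Y$, where $Y$ counts the number of ``keep'' events among the indices in $S$ and satisfies $Y \sim \mathrm{Bin}(|S|, p)$. Substituting $p = 1 - 2(n-k)/n = (2k-n)/n$ and using $|B| < n/2$, a direct computation gives
\[
\E\bigl[|B|+Y\bigr] \;=\; |B| + p(n-|B|) \;=\; (2k-n) + \tfrac{2(n-k)}{n}|B| \;<\; (2k-n)+(n-k) \;=\; k,
\]
so in particular $\E[Y] < k - |B|$ strictly.

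Finally, I would invoke the classical result that the median of $Y \sim \mathrm{Bin}(m,q)$ always lies in $\{\lfloor mq\rfloor,\lceil mq\rceil\}$, and is therefore at most $\lceil \E[Y]\rceil$. Since $k-|B|$ is an integer and $\E[Y]<k-|B|$, we have $\lceil \E[Y]\rceil \le k - |B|$, and hence
\[
\Pr\bigl(|\mathrm{supp}(V)|\le k\bigr) \;\ge\; \Pr(Y\le k-|B|) \;\ge\; \Pr\bigl(Y\le \mathrm{median}(Y)\bigr) \;\ge\; \tfrac{1}{2}.
\]
The main obstacle is precisely this last step: the slack $k - \E[|\mathrm{supp}(V)|]$ can be as small as $\Theta((n-k)/n)$ when $|B|$ is close to $n/2$, so none of Markov's, Chebyshev's, or Chernoff's inequalities (the tools catalogued in the preliminaries) is strong enough on its own. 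The rigidity of the median-within-one-of-mean inequality for the binomial distribution is really what is needed; if a self-contained proof is preferred, this inequality can be obtained by a short monotonicity argument on consecutive binomial probabilities $\Pr(Y=j)$.
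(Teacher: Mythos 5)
Your proof is correct and follows essentially the same route as the paper: you bound the cardinality of the deterministically-retained ``large'' coordinate set, model the random choices on the ``small'' set as a binomial, and invoke the median-within-one-of-mean property of binomial distributions (the paper cites Kaas--Buhrman for exactly this). The only cosmetic difference is that you count the support size directly while the paper counts the number of zeroed coordinates --- these are complementary views of the same binomial, and the integrality bookkeeping works out the same way.
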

	
	\begin{proof}
	Let $m$ be the number of entries in $v^1$ with $(v^1_i)^2\leq \frac{2}{n}$. Since $\|v^1\|_2=1$ we have $m\geq \frac{n}{2}$. By the randomized construction, the number of coordinates of value 0 in $V$ is lower bounded by a binomial random variable $B$ with $m$ trials and success probability $1-p$. Using the definition of $p$ we have the expectation $$\E B = m (1-p) \ge \frac{n}{2} \cdot \frac{2(n-k)}{n} = n-k;$$ since $n-k$ is integer we have $\lfloor \E B \rfloor \ge n-k$. Moreover, it is known that the median of a binomial distribution is at least the expectation rounded down to the nearest integer~\cite{kaas1980mean}, hence $\Pr(B \ge \lfloor \E B \rfloor) \ge \frac{1}{2}$. Chaining these observations we have
	\begin{align*}
	\Pr\big(\text{\# of coordinates of value 0 in $V$} \ge n-k\big) \ge \Pr\big(B \ge n-k\big) \ge \Pr\big(B \ge \lfloor \E B \rfloor\big) \ge \frac{1}{2}.
	\end{align*}
	In other words, our randomized vector $V$ is $k$-sparse with probability at least $\frac{1}{2}$. 
	\end{proof}

 
	Next, we show that with good probability $V$ and $v_1$ are in a ``similar direction''.
	
	\begin{lemma}\label{lemma:parallel}
		With probability $> 1 - \frac{1}{6}$ we have $\ip{V}{v^1} \ge \frac{1}{2}$.
	\end{lemma}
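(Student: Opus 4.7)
The plan is the standard second-moment method: compute $\E[\langle V, v^1 \rangle]$, bound $\var(\langle V, v^1 \rangle)$, and apply Chebyshev's inequality. Partition the coordinates into the set $S = \{i : (v^1_i)^2 > 2/n\}$ of ``large'' indices kept deterministically, and $T = [n] \setminus S$ of ``small'' indices that are independently sparsified. For $i \in S$ the contribution to $\langle V, v^1 \rangle$ is just $(v^1_i)^2$. For $i \in T$, the random variable $V_i v^1_i$ equals $(v^1_i)^2/p$ with probability $p$ and $0$ otherwise, so $\E[V_i v^1_i] = (v^1_i)^2$. Summing over all $i$ and using $\|v^1\|_2 = 1$ gives $\E[\langle V, v^1 \rangle] = \sum_i (v^1_i)^2 = 1$.

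For the variance, the deterministic terms contribute nothing and independence of the remaining $V_i$'s yields $\var(\langle V, v^1 \rangle) = \sum_{i \in T} (v^1_i)^4 (1-p)/p$. I would bound the two factors separately: (i) the threshold definition of $T$ gives $(v^1_i)^4 \leq (2/n)(v^1_i)^2$ for every $i \in T$; and (ii) the hypothesis $k \geq 3n/4$ forces $p = (2k-n)/n \geq 1/2$ and $1-p = 2(n-k)/n \leq 1/2$, so $(1-p)/p \leq 1$. Combining these with $\sum_{i \in T}(v^1_i)^2 \leq \|v^1\|_2^2 = 1$ yields $\var(\langle V, v^1 \rangle) \leq 2/n$.

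Finally, applying Chebyshev's inequality around the mean $1$ gives
\[
\Pr\big(\langle V, v^1 \rangle < \tfrac{1}{2}\big) \;\leq\; \Pr\big(|\langle V, v^1 \rangle - 1| \geq \tfrac{1}{2}\big) \;\leq\; 4\,\var(\langle V, v^1 \rangle) \;\leq\; \frac{8}{n},
\]
which is strictly less than $1/6$ for $n \geq 49$ and hence certainly for $n \geq 97$. There is no real technical obstacle here: the threshold $2/n$ in the definition of $V$ and the choice of $p$ are calibrated exactly so that $\var(\langle V, v^1\rangle)$ decays like $1/n$, which is the slack Chebyshev needs. The only small care-point is checking $(1-p)/p \leq 1$, which is immediate from $k \geq 3n/4$.
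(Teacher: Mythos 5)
Your proof is correct and follows essentially the same second-moment/Chebyshev strategy as the paper; the only difference is that you compute $\var(V_i v^1_i) = (v^1_i)^4(1-p)/p$ exactly and bound $(1-p)/p \le 1$, obtaining $\var \le 2/n$, whereas the paper uses the cruder bound $\var(V_i)\le \E V_i^2 \le 2v_i^2$ to get $4/n$ — both suffice for $n\ge 97$.
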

	
	\begin{proof}
	To simplify the notation we use $v$ to denote $v^1$. By definition of $V$, for each coordinate we have $\E[V_i v_i] = v_i^2$, and hence $\E \ip{V}{v} = \|v\|_2^2 = 1$. 
	
	In addition, let $I$ be the set of coordinates $i$ where $v_i^2 \le \frac{2}{n}$. Then for $i \notin I$ we have $\var(V_i v_i) = 0$, and for $i \in I$ we have $\var(V_i v_i) = v_i^2 \var(V_i) \le \frac{2}{n} \var(V_i)$. Moreover, since $p \ge \frac{1}{2}$ (implied by the assumption $k \ge \frac{3n}{4}$) we have by construction  $V_i \le \frac{v_i}{p} \le 2 v_i$, and hence $$\var(V_i) \le \E V_i^2 \le 2 v_i \E V_i = 2 v_i^2.$$ So using the independence of the coordinates of $V$ we have $$\var \ip{V}{v} = \sum_{i \in I} \var(V_i v_i) \le \frac{4}{n}\, \sum_i v_i^2 = \frac{4}{n}.$$ Then by Chebyshev's inequality we obtain that
$$\Pr\left(\langle V, v\rangle \leq \frac{1}{2}\right) \leq \Pr \left( |\langle V, v\rangle - 1| \geq \frac{1}{2}\right) \leq \frac{16}{n}.$$ Since $n \geq 97$, this proves the lemma. 
	\end{proof}
	

	Finally, we show that $V$ is almost orthogonal to the eigenvectors of $M$ relative to non-negative eigenvalues. 
	
	\begin{lemma} \label{lemma:ortho}
		With probability $\ge 1 -\frac{1}{3}$ we have $\sum_{i \le n - \ell} \mu_i\,\ip{V}{w^i}^2 \le \frac{24(n-k)}{n^{3/2}}$.
	\end{lemma}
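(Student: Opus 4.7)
The plan is to exploit the fact that $v^1$ is orthogonal to every $w^i$, so the randomization of $v^1$ into $V$ is the only source of correlation between $V$ and the $w^i$'s. Bounding this correlation in expectation and applying Markov's inequality will give the high-probability bound.

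First I would observe that $\mathbb{E}[V_j] = v^1_j$ for every coordinate (trivially when $(v^1_j)^2 > 2/n$, and by direct calculation when $(v^1_j)^2 \le 2/n$, since $p \cdot \frac{v^1_j}{p} + (1-p)\cdot 0 = v^1_j$). Combined with $\langle v^1, w^i\rangle = 0$ (orthogonality of eigenvectors), this gives $\mathbb{E}[\langle V, w^i\rangle] = 0$, so $\mathbb{E}[\langle V, w^i\rangle^2] = \var(\langle V, w^i\rangle)$. Since the coordinates of $V$ are independent,
$$\var(\langle V, w^i\rangle) = \sum_{j=1}^n \var(V_j)\,(w^i_j)^2 = \sum_{j \in I} \var(V_j)\,(w^i_j)^2,$$
where $I = \{j : (v^1_j)^2 \le 2/n\}$, since the other $V_j$'s are deterministic.

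Next I would compute $\var(V_j)$ for $j \in I$: a direct calculation yields $\var(V_j) = \frac{1-p}{p}(v^1_j)^2$. Since $k \ge 3n/4$ we have $p \ge 1/2$, so $\frac{1-p}{p} \le 2(1-p) = \frac{4(n-k)}{n}$. Combining this with $(v^1_j)^2 \le 2/n$ for $j \in I$, and using $\|w^i\|_2 = 1$, I obtain
$$\mathbb{E}[\langle V, w^i\rangle^2] \le \frac{4(n-k)}{n}\cdot \frac{2}{n}\sum_{j \in I}(w^i_j)^2 \le \frac{8(n-k)}{n^2}.$$

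Summing against $\mu_i$ and using Cauchy--Schwarz together with the normalization $\sum_i \mu_i^2 \le 1$ from \eqref{eq:normEigen} gives $\sum_{i \le n-\ell} \mu_i \le \sqrt{n-\ell}\,\sqrt{\sum_i \mu_i^2} \le \sqrt{n}$, so
$$\mathbb{E}\!\left[\sum_{i \le n-\ell} \mu_i\,\langle V, w^i\rangle^2\right] \le \frac{8(n-k)}{n^2}\cdot \sqrt{n} = \frac{8(n-k)}{n^{3/2}}.$$
Since the random quantity on the left is non-negative, Markov's inequality yields that it exceeds three times its expectation with probability at most $1/3$, which gives the desired bound $\frac{24(n-k)}{n^{3/2}}$ with probability at least $1 - 1/3$.

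The only mildly delicate step is the variance computation and in particular using the constraint $k \ge 3n/4$ to bound $\frac{1-p}{p}$ by a clean multiple of $\frac{n-k}{n}$; everything else is a straightforward chain of Cauchy--Schwarz plus Markov.
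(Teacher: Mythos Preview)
Your proof is correct and is essentially the same as the paper's: the paper repackages the computation by setting $\overbar{M} := \sum_i \mu_i w^i (w^i)^\top$ and $\Delta := V - v^1$, then bounds $\E[\Delta^\top \overbar{M} \Delta] = \sum_j \overbar{M}_{jj}\,\E\Delta_j^2$, but since $\E\Delta_j^2 = \var(V_j)$ and $\overbar{M}_{jj} = \sum_i \mu_i (w^i_j)^2$, this is exactly your coordinate-wise variance calculation. The variance bound $\var(V_j) = \frac{1-p}{p}(v^1_j)^2 \le \frac{2(1-p)}{np}$, the trace bound $\sum_i \mu_i \le \sqrt{n}$ via Cauchy--Schwarz and \eqref{eq:normEigen}, and the final Markov step with factor $3$ all match the paper exactly.
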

	
	\begin{proof}	
		Again we use $v$ to denote $v^1$.	Define the matrix $\overbar{M} := \sum_{i \le n - \ell} \mu_i w_i w_i^\top$, so we want to upper bound $V^\top \overbar{M} V$. Moreover, let $\Delta=V-v$; since $v$ and the $w_i$'s are orthogonal we have $\overbar{M} v = 0$ and hence 
	\begin{align}
		V^\top \overbar{M} V = v \overbar{M} v + 2 \Delta^\top \overbar{M} v + \Delta^\top \overbar{M} \Delta = \Delta^\top \overbar{M} \Delta, \label{eq:delta}
	\end{align}
	so it suffices to upper bound the right-hand side. 
		
	For that, notice that $\Delta$ has independent entries with the form 
	\begin{equation*}
	\Delta_i=\begin{cases}
	0 & \text{ if }v_i^2>\frac{2}{n}, \\
	\frac{v_i(1-p)}{p} \text{ with probability } p & \text{ if } v_i^2\leq \frac{2}{n}, \\
	-v_i \text{ with probability } 1-p & \text{ if } v_i^2\leq \frac{2}{n}.
	\end{cases}
	\end{equation*}
	So $\E [\Delta_i \Delta_j] = \E \Delta_i \E \Delta_j = 0$ for all $i\neq j$. In addition $\E \Delta_i^2 = 0$ for indices where $v^2_i > \frac{2}{n}$, and  
	\begin{align*}	
		\E \Delta_i^2 \le \frac{v_i^2 (1-p)^2}{p} + v_i^2 (1-p) = v_i^2 \frac{1-p}{p} \le \frac{2(1-p)}{np}.
	\end{align*}	
	Using these we can expand $\E[\Delta^T \overbar{M}\Delta]$ as 
	\begin{align}
	\E[\Delta^T \overbar{M}\Delta] = \E\bigg[\sum_{i,j} \overbar{M}_{ij}\Delta_i \Delta_j\bigg] = \sum_{i,j} \overbar{M}_{ij} \,\E[\Delta_i\Delta_j] &= \sum_{i=1}^{n} \overbar{M}_{ii}\,\E\Delta_i^2 \notag\\
	&\leq \frac{2(1-p)}{np}\Tr(\overbar{M}) \notag\\
	&= \frac{4 (n-k)}{n^2 p}\Tr(\overbar{M}), \label{eq:delta2}
	\end{align}
	where the last equation uses the definition of $p$. 
	
	Since the $\mu_i$'s are the eigenvalues of of $\overbar{M}$, we can therefore bound the trace as
	$$\Tr(\overbar{M})=\sum_{i \le n-\ell} \mu_i \leq \sqrt{n - \ell} \cdot \sqrt{\sum_{i \le n - \ell} \mu_i^2} \le \sqrt{n - \ell} \leq \sqrt{n},$$ where the first inequality follows from the well-known inequality that $\| u\|_1 \leq \sqrt{n}\|u\|_2$ for all $u \in \mathbb{R}^n$ and the second inequality uses $1 = \|M\|_F = \sqrt{\sum_{i \le \ell} \lambda_i^2 + \sum_{i \le n-\ell} \mu_i^2}$. Further using the assumption that $p \ge \frac{1}{2}$, we get from \eqref{eq:delta2} that $$\E[\Delta^T \overbar{M}\Delta] \le \frac{8 (n-k)}{n^{3/2}}.$$
	
	Finally, since all the eigenvalues $\mu_i$ of $\overbar{M}$ are non-negative, this matrix is PSD and hence the random variable $\Delta^\top \overbar{M} \Delta$ is non-negative. Markov's inequality 
	then gives that 
	\begin{align*}
		\Pr\bigg(\Delta^\top \overbar{M} \Delta \ge \frac{24 (n-k)}{n^{3/2}} \bigg) \le \Pr\bigg(\Delta^\top \overbar{M} \Delta \ge 3\, \E [\Delta^\top \overbar{M} \Delta] \bigg) \le \frac{1}{3}. 
	\end{align*}
	This concludes the proof of the lemma. 
	\end{proof}
	
	With these properties of $V$ we can finally upper bound the modulus $\lambda_1$ of the most negative eigenvalue of $M$.
	
	\begin{lemma} \label{lemma:lambda}
		$\lambda_1 \le \frac{96(n-k)}{n^{3/2}}.$
	\end{lemma}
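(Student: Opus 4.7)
The plan is to combine the three probabilistic properties of the random sparsification $V$ established in Lemmas \ref{lemma:sparse}, \ref{lemma:parallel}, and \ref{lemma:ortho} via a union bound, extract a deterministic $k$-sparse vector $\bar{v}$ satisfying all three properties simultaneously, and then invoke Observation \ref{obs:sparse} to obtain the desired bound on $\lambda_1$.

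More precisely, I would first check that the union bound is (just barely) good enough: the failure probabilities of the three events are at most $\tfrac{1}{2}$, strictly less than $\tfrac{1}{6}$, and at most $\tfrac{1}{3}$, which sum to strictly less than $1$. Hence with positive probability, the random vector $V$ is simultaneously $k$-sparse, satisfies $\ip{V}{v^1} \ge \tfrac{1}{2}$, and satisfies $\sum_{i \le n-\ell} \mu_i \ip{V}{w^i}^2 \le \tfrac{24(n-k)}{n^{3/2}}$. By the probabilistic method, there exists a deterministic vector $\bar{v}$ satisfying all three of these conditions.

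Next, since $\bar{v}$ is $k$-sparse and $M \in \Snk$, Observation \ref{obs:sparse} gives $\bar{v}^\top M \bar{v} \ge 0$. Expanding this using the eigendecomposition \eqref{eq:decomp},
\begin{equation*}
0 \;\le\; \bar{v}^\top M \bar{v} \;=\; -\sum_{i \le \ell} \lambda_i \ip{v^i}{\bar{v}}^2 + \sum_{i \le n-\ell} \mu_i \ip{w^i}{\bar{v}}^2.
\end{equation*}
Since every $\lambda_i \ge 0$, dropping all terms in the first sum except $i=1$ can only increase the right-hand side, so
\begin{equation*}
0 \;\le\; -\lambda_1 \ip{v^1}{\bar{v}}^2 + \sum_{i \le n-\ell} \mu_i \ip{w^i}{\bar{v}}^2 \;\le\; -\frac{\lambda_1}{4} + \frac{24(n-k)}{n^{3/2}},
\end{equation*}
using $\ip{v^1}{\bar{v}}^2 \ge \tfrac{1}{4}$ and the orthogonality bound from Lemma \ref{lemma:ortho}. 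Rearranging yields $\lambda_1 \le \tfrac{96(n-k)}{n^{3/2}}$, as required.

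There is no serious obstacle remaining since the three component lemmas have already been proved; the only mildly delicate point is that the union bound is essentially tight (the failure probabilities sum to exactly $1$), which is why Lemma \ref{lemma:parallel} was stated with strict inequality. Once the final bound on $\lambda_1$ is established, combining it with Proposition \ref{prop:ub:smallev} immediately gives $\dist_F(M, \psd) \le \sqrt{n-k} \cdot \lambda_1 \le 96 \big(\tfrac{n-k}{n}\big)^{3/2}$, completing the proof of Theorem \ref{thm:upper2}.
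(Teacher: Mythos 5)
Your proposal is correct and matches the paper's proof essentially step for step: union bound over the three lemmas (noting the strict inequality in Lemma~\ref{lemma:parallel} makes the total failure probability strictly less than $1$), extraction of a deterministic $\bar{v}$ by the probabilistic method, and then the quadratic-form inequality $0 \le \bar{v}^\top M \bar{v} \le -\lambda_1/4 + 24(n-k)/n^{3/2}$ via Observation~\ref{obs:sparse} and the eigendecomposition. The additional remark about dropping the terms $i \ge 2$ from the negative-eigenvalue sum is a minor clarification the paper leaves implicit, but the argument is the same.
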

	
	\begin{proof}
		We take the union bound 
		over Lemmas \ref{lemma:sparse} to \ref{lemma:ortho}. In other words, the probability that $V$ fails at least one of the properties in above three lemmas is strictly less than $\frac{1}{2}+\frac{1}{6}+\frac{1}{3}=1$. Therefore, with strictly positive probability $V$ satisfies all these properties. That is, there is a vector $\bar{v} \in \R^n$ that is $k$-sparse, has $\ip{\bar{v}}{v^1} \ge \frac{1}{2}$ and $\sum_{i \le n - \ell} \mu_i\,\ip{\bar{v}}{w^i}^2 \le \frac{24(n-k)}{n^{3/2}}$. Then using Observation \ref{obs:sparse} and the eigendecomposition \eqref{eq:decomp}
	\begin{align*}
		0 \stackrel{\textrm{Obs \ref{obs:sparse}}}{\le} \bar{v}^\top M \bar{v} = - \sum_{i \le \ell} \lambda_i \, \ip{\bar{v}}{v^i}^2 + \sum_{i \le n-\ell} \mu_i \, \ip{\bar{v}}{w^i}^2 \le -\frac{\lambda_1}{4} + \frac{24(n-k)}{n^{3/2}}.		
	\end{align*}
	Reorganizing the terms proves the lemma. 	
	\end{proof}


	\subsection{Concluding the proof of Theorem \ref{thm:upper2}}
	
Plugging the upper bound on $\lambda_1$ from Lemma \ref{lemma:lambda} into Proposition~\ref{prop:ub:smallev} we obtain that $$\dist_F(M, \psd) \le 96\, \bigg(\frac{n-k}{n}\bigg)^{3/2}.$$ Since this holds for all unit-norm $M \in \S^{n,k}$, we have that $\distb_F(\S^{n,k},\psd)$ also satisfies the same upper bound. This concludes the proof. 
	





\section{Proof of Theorem~\ref{thm:lower1}: A specific family of matrices in $\S^{n,k}$}\label{sec:proofThmlower1}
 To prove the lower bounds on $\distb_F(\mathcal{S}^{n,k},\mathcal{S}^n_+)$ we construct specific families of matrices in $\mathcal{S}^{n,k}$ with Frobenius norm 1, and then lower bound their distance to the PSD cone.
 
 For the first lower bound in Theorem~\ref{thm:lower1}, we consider the construction where all diagonal entries are the same, and all off-diagonal ones are also the same. More precisely, given scalars $a,b \ge 0$ we define the matrix 
	\begin{eqnarray}
		G(a,b, n):= (a + b) I_n -a \textbf{1} \textbf{1}^{\top},
	\end{eqnarray}
where $I_n$ is the $n \times n$ identity matrix, and $\textbf{1}$  is the column vector with all entries equal to $1$. In other words, all diagonal entries of $G(a,b,n)$ are $b$, and all off-diagonal ones are $-a$. 

The parameter $a$ will control how far this matrix is from PSD: for $a=0$ it is PSD, and if $a$ is much bigger than $b$ it should be ``far'' from the PSD cone. We then directly compute its eigenvalues, as well as its Frobenius distance to the PSD cone. 

\begin{proposition}\label{Gprop:evals}
	The eigenvalues of $G(a,b,n)$ are $b-(n-1)a$ with multiplicity 1, and $b+a$ with multiplicity $n-1$. 
\end{proposition}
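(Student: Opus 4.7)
The plan is to recognize $G(a,b,n)$ as a rank-one update of a scalar multiple of the identity and read off its spectrum directly from the spectrum of $\mathbf{1}\mathbf{1}^\top$. Concretely, write
\[
G(a,b,n) \;=\; (a+b)\,I_n \;-\; a\,\mathbf{1}\mathbf{1}^\top,
\]
and use the elementary fact that adding $\alpha I_n$ to any symmetric matrix shifts each eigenvalue by $\alpha$ while preserving all eigenvectors. Hence the eigenpairs of $G(a,b,n)$ are in bijection with those of $\mathbf{1}\mathbf{1}^\top$: if $\mathbf{1}\mathbf{1}^\top v = \mu v$, then $G(a,b,n)\,v = (a+b - a\mu)\,v$.

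Next I would compute the spectrum of $\mathbf{1}\mathbf{1}^\top$. This matrix is symmetric of rank one, so it has a single nonzero eigenvalue; its trace equals $n$, so the nonzero eigenvalue must be $n$, attained by $v = \mathbf{1}$ (since $\mathbf{1}\mathbf{1}^\top \mathbf{1} = n\,\mathbf{1}$). The remaining $n-1$ eigenvalues are $0$, with eigenspace equal to the hyperplane $\mathbf{1}^\perp = \{v \in \mathbb{R}^n : \mathbf{1}^\top v = 0\}$.

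Substituting back through the shift $\mu \mapsto a+b - a\mu$ yields the two claimed eigenvalues of $G(a,b,n)$: the eigenvalue associated with $v = \mathbf{1}$ is $a+b - a\cdot n = b - (n-1)a$ with multiplicity $1$, and the eigenvalue associated with $\mathbf{1}^\perp$ is $a+b - a\cdot 0 = a+b$ with multiplicity $n-1$. This exhausts all $n$ eigenvalues (since multiplicities sum to $n$), so the characterization is complete.

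There is no real obstacle here; the only thing to be careful about is matching multiplicities and verifying the eigenvector/eigenspace assignment, both of which follow immediately from the rank-one structure of $\mathbf{1}\mathbf{1}^\top$.
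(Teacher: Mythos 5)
Your proof is correct and follows essentially the same route as the paper: both diagonalize $G(a,b,n)$ by exploiting the spectral decomposition of the rank-one matrix $\mathbf{1}\mathbf{1}^\top$ (eigenvalue $n$ on $\mathrm{span}\{\mathbf{1}\}$, eigenvalue $0$ on $\mathbf{1}^\perp$) and then shift by $(a+b)I_n$. The paper writes out an explicit orthonormal basis while you read off the spectrum from rank and trace, but the underlying argument is identical.
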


\begin{proof}
	Let $\{v^1,...,v^n\}$ be an orthonormal basis of $\R^n$ such that $\sqrt{n}v^1=\textbf{1}$. Then we can rewrite $G(a,b,n)$ as 
	\begin{align*}
	G(a,b,n)& = (a+b)\sum_{i=1}^n v^i (v^i)^{\top}-nav^1 (v^1)^{\top}\\
	& = \big(b-(n-1)a\big)v^1 (v^1)^{\top}+(a+b)\sum_{i=2}^n v^i (v^i)^{\top}.
	\end{align*} 
	This gives a spectral decomposition of $G(a,b,n)$, so it has the aforementioned set of eigenvalues. 
\end{proof}

The next two corollaries immediately follow from Proposition~\ref{Gprop:evals}.
\begin{corollary}\label{cor:Gprop1}
	If $a,b \geq 0$, then $G(a, b, n) \in \mathcal{S}^{n,k}$ iff $b \geq (k -1)a$. In particular, since $\S^{n,n}=\psd$, $G(a,b,n)\in\psd$ iff $b\geq (n-1)a$. 
\end{corollary}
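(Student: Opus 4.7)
The plan is to observe that $G(a,b,n)$ has a highly symmetric structure: every $k \times k$ principal submatrix of $G(a,b,n)$ is obtained by extracting $k$ rows and the corresponding $k$ columns, so the diagonal entries remain $b$ and the off-diagonal entries remain $-a$. Hence every such principal submatrix is exactly $G(a,b,k)$. In particular, $G(a,b,n) \in \mathcal{S}^{n,k}$ if and only if the single matrix $G(a,b,k)$ is PSD.

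Next, I would invoke Proposition~\ref{Gprop:evals} applied to $G(a,b,k)$ (not $G(a,b,n)$): its eigenvalues are $b-(k-1)a$ with multiplicity $1$, and $b+a$ with multiplicity $k-1$. Since we assume $a,b \ge 0$, the eigenvalue $b+a$ is automatically nonnegative, so $G(a,b,k)$ is PSD if and only if the remaining eigenvalue satisfies $b-(k-1)a \ge 0$, i.e.\ $b \ge (k-1)a$.

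Combining these two observations gives the equivalence: $G(a,b,n) \in \Snk$ iff $b \ge (k-1)a$, as claimed. The ``in particular'' clause is then just the specialization $k = n$, using that $\mathcal{S}^{n,n} = \psd$ (a fact already noted in the paper immediately after the definition of the $k$-PSD closure).

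There is no real obstacle here; this is a direct corollary of the eigenvalue computation in Proposition~\ref{Gprop:evals}, once one notices the self-similarity of $G(a,b,n)$ under taking principal submatrices. The only point meriting a sentence of justification is the step that principal submatrices of $G(a,b,n)$ are themselves of the form $G(a,b,k)$, which follows immediately from the definition $G(a,b,n) = (a+b)I_n - a\mathbf{1}\mathbf{1}^\top$.
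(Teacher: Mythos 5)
Your proof is correct and follows essentially the same route as the paper: both observe that every $k\times k$ principal submatrix of $G(a,b,n)$ is exactly $G(a,b,k)$, then apply Proposition~\ref{Gprop:evals} to $G(a,b,k)$ and use $a,b\ge 0$ to reduce PSD-ness to the single condition $b-(k-1)a\ge 0$. You simply spell out the eigenvalue reasoning a bit more explicitly than the paper's one-line proof.
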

\begin{proof}
	Note that every $k \times k$ principal submatrix of $G(a, b, n)$ is just the matrix $G(a, b, k)$, which belongs to $\psd$ iff $b-(k-1)a\geq 0$, since $a,b\geq 0$. 
\end{proof}
\begin{corollary}\label{cor:Gprop2}
	If $a,b \geq 0$, then $\dist_F(G(a,b,n), \psd) = \max\{(n -1) a - b, 0\}$.
\end{corollary}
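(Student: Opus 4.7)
The plan is to combine the explicit eigenvalue computation from Proposition~\ref{Gprop:evals} with the characterization of Frobenius distance to the PSD cone in terms of negative eigenvalues given by Proposition~\ref{prop:neval}. Since $a,b \geq 0$, the eigenvalue $a+b$ of multiplicity $n-1$ is automatically non-negative, so the only eigenvalue of $G(a,b,n)$ that can possibly be negative is $b - (n-1)a$.

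I would split into two cases. First, if $b \geq (n-1)a$, then all eigenvalues of $G(a,b,n)$ are non-negative, so $G(a,b,n) \in \psd$ and $\dist_F(G(a,b,n), \psd) = 0$; in this case $(n-1)a - b \leq 0$ and the claimed formula $\max\{(n-1)a - b, 0\}$ evaluates to $0$, as desired. Second, if $b < (n-1)a$, then $G(a,b,n)$ has exactly one negative eigenvalue, namely $-\lambda_1 := b - (n-1)a$, so that $\lambda_1 = (n-1)a - b > 0$. Applying Proposition~\ref{prop:neval} with $\ell = 1$ yields
\[
\dist_F(G(a,b,n), \psd) = \sqrt{\lambda_1^2} = (n-1)a - b = \max\{(n-1)a - b, 0\},
\]
which matches the claim. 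Combining both cases gives the corollary.

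There is no real obstacle here; the corollary is essentially a direct specialization of the two referenced propositions to the explicit spectrum of $G(a,b,n)$. The only care needed is to correctly identify which eigenvalue can be negative under the hypothesis $a,b \geq 0$ and to handle the boundary $b = (n-1)a$ (where both branches of the $\max$ agree).
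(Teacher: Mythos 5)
Your proof is correct and follows essentially the same route as the paper: split on whether $b \geq (n-1)a$, note the single possibly-negative eigenvalue from Proposition~\ref{Gprop:evals}, and apply Proposition~\ref{prop:neval}. The only cosmetic difference is that the paper cites Corollary~\ref{cor:Gprop1} for the first case while you reason directly from the eigenvalues, which amounts to the same thing.
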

\begin{proof}
	If $b\geq (n-1)a$, then $G(a,b,n)\in\psd$ from first corollary, so $\dist_F(G(a,b,n), \psd)=0$ by definition. 
	
	If $b<(n-1)a$, then $G(a,b,n)$ has only one negative eigenvalue $b-(n-1)a$. Thus using Proposition~\ref{prop:neval} we get $\dist_F(G(a,b,n), \psd)=(n-1)a-b$. 
\end{proof}

	To conclude the proof of Theorem \ref{thm:lower1}, let $\bar{a} = \frac{1}{\sqrt{ ( k - 1)^2n  + n(n -1)} }$ and $\bar{b} = (k-1) \bar{a}$. From Corollary~\ref{cor:Gprop1} we know that $G(\bar{a}, \bar{b}, n)$ belongs to the \closure $\mathcal{S}^{n,k}$, and it is easy to check that it has Frobenius norm 1. Then using Corollary~\ref{cor:Gprop2} we get
	\begin{align*}
		\distb_F(\S^{n,k},\psd) \ge \dist_F(G(\bar{a} , \bar{b}, n), \psd) = (k-1) \bar{a} = \frac{n - k}{\sqrt{ ( k - 1)^2n  + n(n -1)}}.
	\end{align*}	
	This concludes the proof.

\section{Proof of Theorem~\ref{thm:lower2}: RIP construction when $k= O(n)$}\label{sec:proofThm3}

	Again, to prove the lower bound $\distb_F(\S^{n,k}, \psd) \ge cst$ for a constant $cst$ we will construct (randomly) a unit-norm matrix $M$ in the \closure $\S^{n,k}$ that has distance at least $cst$ from the PSD cone $\psd$; we will use its negative eigenvalues to assess this distance, via Proposition~\ref{prop:neval}. 	
	
	\paragraph{Motivation for connection with RIP property.} Before presenting the actual construction, we give the high-level idea of how the RIP property (Definition \ref{def:RIP} below) fits into the picture.  For simplicity, assume $k = n/2$.  (The actual proof will not have this value of $k$). The idea is to construct a matrix $M$ where about half of its eigenvalues take the negative value $-\frac{1}{\sqrt{n}}$, with orthonormal eigenvectors $v^1,v^2,\ldots, v^{n/2}$, and rest take a positive value $\frac{1}{\sqrt{n}}$, with orthonormal eigenvectors $w^1,w^2,\ldots, w^{n/2}$). This normalization makes $\|M\|_F = \Theta(1)$, so the reader can just think of $M$ being unit-norm, as desired. In addition, from Proposition~\ref{prop:neval} this matrix is far from the PSD cone: $\dist_F(M,\psd) \gtrsim \sqrt{\left(\frac{1}{\sqrt{n}}\right)^2 \cdot \frac{n}{2}} = cst$. So we only need to guarantee that $M$ belongs to the \closure; for that we need to carefully choose its positive eigenspace, namely the eigenvectors $w^1,w^2,\ldots, w^{n/2}$.
	
	Recall that from Observation \ref{obs:sparse}, $M$ belongs to the \closure iff $x^\top M x$ for all $k$-sparse vectors $x \in \R^n$. Letting $V$ be the matrix with rows $v^1,v^2,\ldots,$ and $W$ the matrix with rows $w^1,w^2,\ldots$, the quadratic form $x^\top M x$ is 
	
	\begin{align*}
		x^\top M x = - \frac{1}{\sqrt{n}} \sum_i \ip{v^i}{x}^2 + \frac{1}{\sqrt{n}} \sum_i \ip{w^i}{x}^2 = -\frac{1}{\sqrt{n}} \|Vx\|_2^2 + \frac{1}{\sqrt{n}} \|Wx\|_2^2.
	\end{align*}
	Since the rows of $V$ are orthonormal we have $\|Vx\|_2^2 \le \|x\|_2^2$. Therefore, if we \emph{could construct the matrix $W$ so that for all $k$-sparse vectors $x \in \R^n$ we had $\|Wx\|_2^2 \approx \|x\|_2^2$}, we would be in good shape, since we would have 
	\begin{align}
		x^\top M x \gtrsim - \frac{1}{\sqrt{n}} \|x\|_2^2 + \frac{1}{\sqrt{n}} \|x\|_2^2 \gtrsim 0 \qquad\qquad\textrm{for all $k$-sparse vectors $x$}, \label{eq:preRIP}
	\end{align}
	thus $M$ would be (approximately) in the \closure. This approximate preservation of norms of sparse vectors is precisely the notion of the \emph{Restricted Isometry Property} (RIP)~\cite{CandesTao1,candes2006stable}. 

\begin{definition}[RIP] \label{def:RIP}
	Given $k<m<n$, an $m\times n$ matrix $A$ is said to be $(k,\delta)$-RIP if for all $k$-sparse vectors $x \in \R^n$, we have
	$$
	(1-\delta)\|x\|_2^2\leq \|Ax\|_2^2\leq (1+\delta)\|x\|_2^2.
	$$
\end{definition}

		
This definition is very important in signal processing and recovery~\cite{Candes2008survey,Chartrand2008RIP,CandesTao1,candes2006stable}, and there has been much effort trying to construct deterministic~\cite{Calderbank2010RIPdet,Bandeira2013RIPdet} or randomized~\cite{baraniuk2008simple} matrices satisfying given RIP guarantees. 
	
	
	The following theorem in \cite{baraniuk2008simple} provides a probabilistic guarantee for a random Bernoulli matrix to have the RIP.

\begin{theorem}[(4.3) and (5.1) in \cite{baraniuk2008simple}] \label{thm:RIP}
	Let $A$ be an $m\times n$ matrix where each entry is independently $\pm 1/\sqrt{m}$ with probability $1/2$. Then $A$ is $(k,\delta)$-RIP with probability at least
	
	\begin{equation}\label{rip}
	1-2\left(\frac{12}{\delta}\right)^k e^{-\left(\delta^2/16-\delta^3/48\right)m}.
	\end{equation}
\end{theorem}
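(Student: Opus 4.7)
The plan is to follow the standard two-step approach for RIP of random matrices: first establish concentration of $\|Ax\|_2^2$ around $\|x\|_2^2$ for a single fixed vector $x$, and then promote this pointwise bound to a uniform one over all $k$-sparse vectors supported in a fixed set $T$ via a covering-net argument. The stated failure probability $2(12/\delta)^k \exp(-(\delta^2/16 - \delta^3/48)m)$ is precisely what this procedure produces on one fixed $k$-subset $T$; the global $(k,\delta)$-RIP claim then follows (up to absorbing a $\binom{n}{k}$ factor into $m$) by union bound over supports, which is the form in which the theorem gets applied.

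\textbf{Step 1 (single-vector concentration).} Fix a unit vector $x \in \mathbb{R}^n$ and write $\|Ax\|_2^2 = \sum_{i=1}^m Y_i^2$ with $Y_i := \frac{1}{\sqrt{m}}\sum_{j=1}^n \epsilon_{ij} x_j$, where the $\epsilon_{ij}$ are independent Rademacher signs. Independence across rows and $\sum_j x_j^2 = 1$ give $\mathbb{E}\|Ax\|_2^2 = 1$. The target is the concentration bound
\begin{equation*}
\Pr\!\left[\,\bigl|\|Ax\|_2^2 - 1\bigr| \geq \delta\,\right] \leq 2\exp\!\bigl(-m(\delta^2/16 - \delta^3/48)\bigr).
\end{equation*}
I would prove this by Chernoff's method applied to the independent summands $Y_i^2 - 1/m$. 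Each $Y_i$ is a normalized Rademacher sum and hence subgaussian with parameter $1/\sqrt{m}$; coupling this with the Laplace/Gaussian-integral trick that rewrites $e^{\theta Y_i^2}$ as an expectation over an auxiliary Gaussian yields a clean bound on $\mathbb{E}\,e^{\theta(Y_i^2 - 1/m)}$ valid for $\theta$ in a neighborhood of zero. Tensorizing across the $m$ independent rows and optimizing $\theta$ for each tail produces the concentration inequality, with the constants $1/16$ and $1/48$ emerging from the quadratic and cubic terms of the Taylor expansion of the cumulant generating function.

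\textbf{Step 2 (net argument on a fixed support).} Fix a $k$-subset $T \subseteq [n]$ and consider the unit sphere $S_T := \{x \in \mathbb{R}^n : \mathrm{supp}(x) \subseteq T,\, \|x\|_2 = 1\}$, which lives in a $k$-dimensional subspace. A standard volumetric argument produces a $(\delta/4)$-net $\mathcal{N}_T$ of cardinality at most $(12/\delta)^k$. Apply Step 1 at each $q \in \mathcal{N}_T$ with an appropriately scaled tolerance and take a union bound: with probability at least $1 - 2(12/\delta)^k \exp(-m(\delta^2/16 - \delta^3/48))$, the bound $\bigl|\|Aq\|_2^2 - 1\bigr| \leq \delta$ (or a variant chosen to close the net-lifting arithmetic) holds simultaneously at every net point. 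The usual net-lifting argument then extends this to all of $S_T$: define $\alpha := \sup_{x \in S_T}\|Ax\|_2$, pick $x^* \in S_T$ nearly achieving the supremum and a net point $q$ with $\|x^* - q\|_2 \leq \delta/4$, and apply the triangle inequality together with homogeneity (noting $(x^*-q)/\|x^*-q\|_2 \in S_T$) to derive a self-bounding inequality $\alpha \leq c_1 + c_2\alpha$ whose rearrangement gives $\alpha^2 \leq 1+\delta$. A symmetric computation produces the lower bound $\|Ax\|_2^2 \geq (1-\delta)\|x\|_2^2$, completing the RIP property on vectors supported in $T$.

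\textbf{Expected main obstacle.} The technically most delicate ingredient is the concentration inequality in Step 1 with the precise constants $\delta^2/16 - \delta^3/48$: a crude bound of the form $c\delta^2$ follows easily from subgaussianity, but obtaining the exact Baraniuk et al.\ exponent requires careful bookkeeping of the moment generating function of the chi-squared-type variable and avoiding slack at each step of the Chernoff optimization. Step 2 is largely mechanical once one pins down the net radius (here $\delta/4$, chosen so that the volumetric bound gives the clean cover size $(12/\delta)^k$ and the self-bounding inequality for $\alpha$ closes with the desired slack); the only place to be careful is the interplay between the tolerance used when invoking Step 1 at each net point and the final RIP constant $\delta$ obtained after lifting.
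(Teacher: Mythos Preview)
The paper does not prove this theorem at all: it is quoted verbatim as a known result from Baraniuk, Davenport, DeVore, and Wakin (2008), with only a citation in lieu of proof. Your sketch is precisely the argument that the cited reference gives --- pointwise subgaussian concentration for $\|Ax\|_2^2$ (with exponent $c_0(\epsilon)=\epsilon^2/4-\epsilon^3/6$, applied at $\epsilon=\delta/2$ to produce the constants $\delta^2/16-\delta^3/48$), a $(\delta/4)$-net of size $(12/\delta)^k$ on the unit sphere of a fixed $k$-dimensional coordinate subspace, and the self-bounding lift from net to sphere --- so in that sense your approach coincides with the ``paper's proof'' by transitivity.

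Your parenthetical remark that the displayed bound is really the failure probability on a \emph{single} support set $T$, and that full $(k,\delta)$-RIP requires an additional $\binom{n}{k}$ union bound, is correct and worth flagging: as written, the theorem statement in the paper omits that factor, yet the downstream application in Lemma~\ref{lemma:RIP1} genuinely needs the global RIP. With the parameters used there ($m=93k$, $\delta=0.9$) the per-support failure probability decays like $e^{-ck}$, which is not by itself enough to absorb $\binom{n}{k}\approx (en/k)^k$ for arbitrary $n$; so either the constants in the paper's numerical verification are implicitly tuned to the regime $k=rn$ (where $\binom{n}{k}\le (e/r)^k$ is itself only singly exponential in $k$), or there is a small gap in how the citation is being invoked. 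This is a wrinkle in the paper rather than in your proposal.
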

	

\paragraph{Proof of Theorem~\ref{thm:lower2}} After we have observed the above connection between matrices in $\mathcal{S}^{n,k}$ and RIP matrices, in the actual proof we adopt a strategy that does not ``flow'' exactly as described above but is easier to analyze. We will: 1) select $W$, a RIP matrix by selecting parameters $m$ and $\delta$ and applying Theorem~\ref{thm:RIP}; 2) use it to construct a matrix $M \in \mathcal{S}^{n, k}$; 3) rescale the resulting matrix so that its Frobenius norm is $1$, and; 4) finally compute its distance from $\psd$ and show that this is a constant independent of $n$.


	\paragraph{Actual construction of $M$.} Set $m = 93 k$ and $\delta = 0.9$. Then we can numerically verify that whenever $k\geq 2$, the probability \eqref{rip} is at least $0.51>\frac{1}{2}$. Then let $W$ be a random $m \times n$ matrix as in Theorem \ref{thm:RIP}, and define the matrix $$M := - (1-\delta) I + W^\top W.$$ 
	
	First observe that $M$ has a large relative distance to the PSD cone and with good probability belongs to the \closure.
	
	\begin{lemma} \label{lemma:RIP1}
		The matrix $M$ satisfies the following:
		\begin{enumerate}
			\item With probability at least $0.51$, $M \in \S^{n,k}$
			\item $\dist_F(M,\psd) \ge \sqrt{n-m}\,(1-\delta) $.
		\end{enumerate}
	\end{lemma}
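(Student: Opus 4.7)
The plan is to handle the two statements separately. For part (1) the strategy is to show that \emph{if} $W$ happens to be $(k,\delta)$-RIP, then $M \in \S^{n,k}$ deterministically; a direct application of Theorem~\ref{thm:RIP} with the chosen parameters then gives the probability bound. For part (2), the strategy is to exhibit many eigenvalues of $M$ equal to $-(1-\delta)$ by exploiting the low rank of $W^\top W$, and then apply Proposition~\ref{prop:neval}.

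For part (1), Observation~\ref{obs:sparse} tells us that $M \in \S^{n,k}$ iff $x^\top M x \ge 0$ for every $k$-sparse vector $x \in \R^n$. Expanding directly,
\[
x^\top M x \;=\; -(1-\delta)\|x\|_2^2 + \|Wx\|_2^2,
\]
so the $(k,\delta)$-RIP lower bound $\|Wx\|_2^2 \ge (1-\delta)\|x\|_2^2$ yields $x^\top M x \ge 0$ for all $k$-sparse $x$, giving the desired membership. It then remains to check that the probability bound \eqref{rip} in Theorem~\ref{thm:RIP}, applied with $m = 93k$ and $\delta = 0.9$, is at least $0.51$ for every $k \ge 2$; this is a pure numerical computation. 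With these parameters the expression $1 - 2(12/\delta)^k e^{-(\delta^2/16-\delta^3/48)m}$ is increasing in $k$ (the $k$-th root of the correction term is a fixed constant less than $1/2$), so verifying only the boundary case $k=2$ suffices.

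For part (2), I use that $W^\top W$ is PSD with rank at most $m$, so its null space has dimension at least $n-m$. Restricted to this null space, $M = -(1-\delta) I + W^\top W$ acts as $-(1-\delta) I$; equivalently, $M$ has at least $n-m$ eigenvalues equal to $-(1-\delta)$. Proposition~\ref{prop:neval} then gives
\[
\dist_F(M, \psd) \;=\; \sqrt{\sum_i \lambda_i^2} \;\ge\; \sqrt{(n-m)\,(1-\delta)^2} \;=\; \sqrt{n-m}\,(1-\delta),
\]
where the sum ranges over moduli of negative eigenvalues of $M$, and we simply keep the $n-m$ of them produced by the null space of $W^\top W$.

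Overall the proof is quite short once the construction $M = -(1-\delta)I + W^\top W$ is in place; the conceptual work was front-loaded into the motivation, and the two parts reduce to one-line applications of the RIP inequality (with Observation~\ref{obs:sparse}) and of the rank/spectrum argument (with Proposition~\ref{prop:neval}). The main non-routine step is therefore the numerical verification at $k=2$ in Theorem~\ref{thm:RIP}; after that $k$, the bound only improves and the rest of the argument is algebraic.
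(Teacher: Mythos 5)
Your proof is correct and follows essentially the same route as the paper: part (1) is the RIP lower bound combined with Observation~\ref{obs:sparse}, and part (2) exhibits $n-m$ eigenvalues of $M$ equal to $-(1-\delta)$ via the kernel of $W$ and applies Proposition~\ref{prop:neval}. The only cosmetic difference is that the paper performs the numerical check that \eqref{rip} exceeds $0.51$ for $m=93k$, $\delta=0.9$ as part of the construction preceding the lemma rather than inside its proof (and your parenthetical is slightly loose --- the exact $k$-th root carries a $2^{1/k}$ factor --- but the underlying point that the failure term is $2c^k$ with $c = (12/\delta)e^{-93(\delta^2/16-\delta^3/48)} \approx 0.494 < 1$, hence decreasing in $k$, is correct and does justify checking only $k=2$).
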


	\begin{proof}	
		Whenever $W$ is $(k,\delta)$-RIP, by definition, for all $k$-sparse $x$ we have $x^{\top} W^{\top} Wx=\|Wx\|^2\geq (1-\delta)x^{\top }x$. Therefore $x^{\top} Mx\geq 0$ for all $k$-sparse $x$, and hence $M\in\Snk$ by Observation~\ref{obs:sparse}. This gives the first item of the lemma. 
		
		For the second item, notice that all vectors in the kernel of $W$, which has dimension $n-m$, are eigenvectors of $M$ with eigenvalue $-(1-\delta)$. So the negative eigenvalues of $M$ include at least $n-m$ copies of $-(1-\delta)$, and the result follows from Proposition~\ref{prop:neval}.
	\end{proof}	
	
	Now we need to normalize $M$, and for that we need to control its Frobenius norm. 

	
	\begin{lemma} \label{lemma:RIPnorm}
		With probability at least $\frac{1}{2}$, $\|M\|_F^2 \le  2n\delta^2+\frac{2n(n-1)}{m}$.
	\end{lemma}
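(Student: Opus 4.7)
The plan is to compute $\E\|M\|_F^2$ in closed form and then apply Markov's inequality to get the tail bound with probability $\frac{1}{2}$. First I would write $M$ entrywise as $M_{ij} = (W^\top W)_{ij} - (1-\delta)\mathbf{1}[i=j]$, and handle diagonal and off-diagonal terms separately.

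For the diagonal entries, since each $W_{li}^2 = 1/m$ deterministically, we have $(W^\top W)_{ii} = \sum_{l=1}^m W_{li}^2 = 1$, so $M_{ii} = \delta$ deterministically. This contributes exactly $n\delta^2$ to $\|M\|_F^2$, with no randomness. For the off-diagonal entries with $i \neq j$, we have $M_{ij} = \sum_{l=1}^m W_{li}W_{lj}$, a sum of $m$ independent Rademacher-type terms each taking values $\pm 1/m$ with equal probability. Thus $\E[M_{ij}] = 0$ and
\[
\E[M_{ij}^2] = \sum_{l=1}^m \E[W_{li}^2 W_{lj}^2] = m \cdot \frac{1}{m^2} = \frac{1}{m}.
\]
Summing over all entries, $\E\|M\|_F^2 = n\delta^2 + n(n-1)/m$.

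To conclude, since $\|M\|_F^2$ is a non-negative random variable, Markov's inequality (with $a=2$) gives
\[
\Pr\!\left(\|M\|_F^2 \ge 2\,\E\|M\|_F^2\right) \le \frac{1}{2},
\]
which means with probability at least $\frac{1}{2}$ we have $\|M\|_F^2 \le 2n\delta^2 + \frac{2n(n-1)}{m}$, exactly the claimed bound.

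I do not expect any real obstacle here: the computation is a routine second-moment calculation that exploits the independence of the entries of $W$ and the fact that Rademacher cross-products are mean-zero and pairwise uncorrelated. The only thing to be careful about is that the diagonal contribution $n\delta^2$ is deterministic (not random), so Markov is really being applied only to the off-diagonal contribution; but since the deterministic part is upper bounded by itself, adding it back in does not affect the Markov step, and the factor $2$ in front of both terms in the stated bound is exactly what the choice $a=2$ produces.
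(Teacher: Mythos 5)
Your proof is correct and follows essentially the same route as the paper: compute the diagonal entries of $M$ exactly (each equal to $\delta$), compute $\E[M_{ij}^2]=1/m$ for $i\neq j$ by the second-moment calculation on the Rademacher columns of $W$, sum to get $\E\|M\|_F^2 = n\delta^2 + n(n-1)/m$, and finish with Markov's inequality at $a=2$. The closing remark about whether Markov is ``really'' only applied to the off-diagonal contribution is unnecessary: applying Markov directly to the non-negative random variable $\|M\|_F^2$, as the paper does, already yields the stated bound.
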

	
	\begin{proof}
	Notice that the diagonal entries of $W^\top W$ equal $1$, so
$$
 \|M\|_F^2 = \sum_{i = 1}^n M_{ii}^2 + \sum_{i , j \in [n], i \neq j} M_{ij}^2 = n \delta^2 + \sum_{i , j \in [n], i \neq j} (W^{\top}W)_{ij}^2.
$$ 
We upper bound the last sum. Let the columns of $W$ be $C^1,...,C^n$, and denote by $X_{ij} = \ip{C^i}{C^j}$ the $ij$-th entry of $W^{\top} W$. Notice that when $i\neq j$, $X_{ij}$ is the sum of $m$ independent random variables $C^i_\ell C^j_\ell$ that take values $\{-\frac{1}{m},\frac{1}{m}\}$ with equal probability, where $\ell$ ranges from $1$ to $m$. Therefore,
$$
\E X_{ij}^2 = \var(X_{ij}) = \sum_{\ell \in [m]} \var(C^i_\ell C^j_\ell) = m \,\frac{1}{m^2} = \frac{1}{m}.
$$
This gives that 
$$
\E\, \|M\|_F^2 = n\delta^2+\frac{n(n-1)}{m}.
$$
Since $\|M\|_F^2$ is non-negative, from Markov's inequality $\|M\|_F^2 \le 2 \E \, \|M\|_F^2$ with probability at least $1/2$. This gives the desired bound, concluding the proof. 
	\end{proof}

	Taking a union bound over Lemmas \ref{lemma:RIP1} and \ref{lemma:RIPnorm}, with strictly positive probability the normalized matrix $\frac{M}{\|M\|_F}$ belongs to $\S^{n,k}$ and has $$\dist_F\left(\frac{M}{\|M\|_F}, \psd\right) \ge \frac{\sqrt{n-m}\,(1-\delta)}{\sqrt{2n(n-1)/m+2n\delta^2}} \geq \frac{\sqrt{n-m}\,(1-\delta)}{\sqrt{2n^2/m+2n\delta^2}}.$$ Thus, there is a matrix with such properties. 

Now plugging in $k=rn, m=93k, \delta=0.9$, the right hand side is  at least $\frac{\sqrt{r-93r^2}}{\sqrt{162r+3}}$. 
	This concludes the proof of Theorem \ref{thm:lower2}.


\section{Proof of Theorem~\ref{prop:sampling}}\label{sec:propsampling}

	The idea of the proof is similar to that of Theorem \ref{thm:upper1} (in Section \ref{sec:upper1}), with the following difference: Given a unit-norm matrix $M \in \S_{\cI}$, we construct a matrix $\overtilde{M}$ by averaging over the principal submatrices indexed by \emph{only the $k$-sets in $\cI$} instead of considering all $k$-sets, and upper bound the distance from $M$ to the PSD cone by $\dist_F(M, \alpha\overtilde{M})$. Then we need to provide a uniform upper bound on $\dist_F(M, \overtilde{M})$ that holds \emph{for all $M$'s simultaneously with good probability} (with respect to the samples $\cI$). This will then give an upper bound on $\distb_F(\S_{\cI},\psd)$.
	
	Recall that $\cI = (I_1,\ldots,I_m)$ is a sequence of independent uniform samples from the $k$-sets of $[n]$. 
	As defined in Section~\ref{sec:upper1}, let $M^I$ be the matrix where we zero out all the rows and columns of $M$ except those indexed by indices in $I$. Let $T_{\cI}$ be the (random) partial averaging operator, namely for every matrix $M \in \R^{n \times n}$ 
	\begin{align*}
		T_{\cI}(M) := \frac{1}{|\cI|} \sum_{I \in \cI} M^I.
	\end{align*}

	As we showed in Section \ref{sec:upper1} for the full average $\overtilde{M} := T_{{[n] \choose k}}(M)$, the first observation is that if $M \in \S_{\cI}$, that is, all principal submatrices $\{M_I\}_{I \in \cI}$ are PSD, then the partial average $T_{\cI}(M)$ is also PSD.
	
	\begin{lemma}\label{lemma:partialPSD}
		If $M \in \S_{\cI}$, then $T_{\cI}(M)$ is PSD. 
	\end{lemma}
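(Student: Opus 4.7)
The plan is to argue exactly as in the proof of Theorem~\ref{thm:upper1} (Section~\ref{sec:upper1}), simply restricting attention to the sampled collection $\cI$ rather than all of ${[n] \choose k}$. The key point is that $T_{\cI}(M)$ is a nonnegative combination (in fact a uniform average) of the matrices $M^I$, so it suffices to show that each $M^I$ is individually PSD for $I \in \cI$.

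First I would observe that by the definition of $\S_{\cI}$, for every $I \in \cI$ the principal submatrix $M_I \in \R^{k \times k}$ is PSD. Then I would verify that the ``zero-padded'' matrix $M^I \in \R^{n \times n}$ inherits PSD-ness: for any $x \in \R^n$, only the coordinates of $x$ indexed by $I$ contribute to the quadratic form, and so
\begin{equation*}
x^\top M^I x \;=\; x_I^\top M_I x_I \;\geq\; 0,
\end{equation*}
where the inequality uses that $M_I \succeq 0$. Hence $M^I \succeq 0$ for each $I \in \cI$.

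Finally, since the set of PSD matrices is a convex cone, the average $T_{\cI}(M) = \frac{1}{|\cI|}\sum_{I \in \cI} M^I$ of the PSD matrices $M^I$ is itself PSD, which completes the proof. There is no real obstacle here; this lemma is essentially a bookkeeping statement that extracts the PSD-ness-preserving portion of the averaging argument from Section~\ref{sec:upper1}, and the randomness of $\cI$ plays no role (the claim holds deterministically for any realization of $\cI$). The genuine work of the theorem will come later, when one has to uniformly control $\|M - \alpha T_{\cI}(M)\|_F$ over all unit-norm $M \in \S_{\cI}$ using the sample size $m$.
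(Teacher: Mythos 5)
Your argument is exactly the one the paper uses (and the one already spelled out in Section~\ref{sec:upper1}): each zero-padded matrix $M^I$ is PSD since $x^\top M^I x = x_I^\top M_I x_I \ge 0$, and $T_{\cI}(M)$ is a nonnegative average of these, hence PSD. The paper's proof is just the one-line version of this; your write-up is correct and adds no new ideas beyond filling in the details the paper leaves implicit.
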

	\begin{proof}
		This is straightforward, since each $M^I$ is PSD. 
	\end{proof}

Consider a unit-norm matrix $M$. Now we need to upper bound $\dist_F(M, \alpha\,T_{\cI}(M))$, for a scaling $\alpha$, in a way that is ``independent'' of $M$. In order to achieve this goal, notice that $(T_\cI(M))_{ij} = f_{ij} M_{ij}$, where $f_{ij}$ is the fraction of sets in $\cI$ that contain $\{i,j\}$. Then it is not difficult to see that the Frobenius distance between $M$ and $T_{\cI}(M)$ can be controlled using only these fractions $\{f_{ij}\}$, since the Frobenius norm of $M$ is fixed to be 1. 
	
	The next lemma makes this observation formal. Since the fractions $\{f_{ij}\}$ are random (they depend on $\cI$), the lemma focuses on the typical scenarios where they are close to their expectations. 
	
	Notice that the probability that a fixed index $i$ belongs to $I_\ell$ is $\frac{k}{n}$, so the fraction $f_{ii}$ is $\frac{k}{n}$ in expectation. Similarly, the expected value of $f_{ij}$ is $\frac{k(k-1)}{n(n-1)}$ when $i\neq j$. In other words, the expectation of $T_{\cI}(M)$ is $\overtilde{M}$. 
	
	
	\begin{lemma} \label{lemma:typical}
		Consider $\e \in [0,1)$ and let $\gamma := \frac{k(n-k)}{2n(n-1)}$. Consider a scenario where $\cI$ satisfies the following for some $\e \in [0,1)$:
		\begin{enumerate}
			\item For every $i \in [n]$, the fraction of the sets in $\cI$ containing $i$ is in the interval 
				$\left[\frac{k}{n} - \e \gamma, \frac{k}{n} + \e \gamma \right]$. 			
			\item For every pair of distinct indices $i,j \in [n]$, the fraction of the sets in $\cI$ containing both $i$ and $j$ is in the interval $\left[\frac{k(k-1)}{n(n-1)} - \e \gamma, \frac{k(k-1)}{n(n-1)} + \e \gamma \right]$.
		\end{enumerate}
		Then there is a scaling $\alpha > 0$ such that for all matrices $M \in \R^{n \times n}$ we have $$\dist_F(M, \alpha\, T_{\cI}(M)) \le (1+\e) \frac{n-k}{n+k-2}\,\|M\|_F.$$
	\end{lemma}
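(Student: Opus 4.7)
The plan is to reuse the scaling $\alpha := \alpha_0 = \frac{2n(n-1)}{k(n+k-2)}$ from the proof of Theorem~\ref{thm:upper1}, and then absorb the deviation between the random partial average $T_\cI(M)$ and the full average $\overtilde M$ into the factor $(1+\e)$. Note that $(T_\cI(M))_{ij} = f_{ij} M_{ij}$ where $f_{ii}$ is the fraction of sets in $\cI$ containing $i$ and $f_{ij}$ (for $i\ne j$) is the fraction containing both $i$ and $j$. Hence
\[
    (M - \alpha_0 T_\cI(M))_{ij} = (1 - \alpha_0 f_{ij})\, M_{ij},
\]
and it is enough to bound $|1 - \alpha_0 f_{ij}|$ uniformly by $(1+\e)\frac{n-k}{n+k-2}$ under the typical-scenario hypothesis, since then
\[
    \|M - \alpha_0 T_\cI(M)\|_F^2 \;=\; \sum_{i,j}(1-\alpha_0 f_{ij})^2 M_{ij}^2 \;\le\; (1+\e)^2\left(\tfrac{n-k}{n+k-2}\right)^2 \|M\|_F^2.
\]

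The key calculation, which should be short, is exactly the one already performed for $\overtilde M$ in Section~\ref{sec:upper1}: one has $\alpha_0\cdot \tfrac{k}{n} = \tfrac{2(n-1)}{n+k-2}$ and $\alpha_0\cdot \tfrac{k(k-1)}{n(n-1)} = \tfrac{2(k-1)}{n+k-2}$, so
\[
    1-\alpha_0\cdot\tfrac{k}{n} = -\tfrac{n-k}{n+k-2}, \qquad 1-\alpha_0\cdot\tfrac{k(k-1)}{n(n-1)} = \tfrac{n-k}{n+k-2}.
\]
The hypotheses $f_{ii}\in [\tfrac{k}{n}-\e\gamma,\tfrac{k}{n}+\e\gamma]$ and $f_{ij}\in [\tfrac{k(k-1)}{n(n-1)}-\e\gamma, \tfrac{k(k-1)}{n(n-1)}+\e\gamma]$ then yield
\[
    |1-\alpha_0 f_{ij}| \;\le\; \tfrac{n-k}{n+k-2} + \alpha_0\,\e\,\gamma,
\]
for every pair $(i,j)$, diagonal or off-diagonal.

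The crux is then the identity $\alpha_0 \gamma = \tfrac{n-k}{n+k-2}$, which is precisely why $\gamma$ is defined as $\tfrac{k(n-k)}{2n(n-1)}$: plugging this in yields
\[
    \alpha_0\,\gamma \;=\; \tfrac{2n(n-1)}{k(n+k-2)}\cdot\tfrac{k(n-k)}{2n(n-1)} \;=\; \tfrac{n-k}{n+k-2},
\]
so the bound on the entries becomes $(1+\e)\tfrac{n-k}{n+k-2}$ uniformly, completing the proof after substituting into the Frobenius norm expansion above.

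I do not expect a real obstacle here: the content of the lemma is purely algebraic once $\alpha_0$ is chosen as in Theorem~\ref{thm:upper1}, and the definition of $\gamma$ is tailored so that the additive perturbation $\alpha_0 \e\gamma$ matches $\e$ times the base quantity $\tfrac{n-k}{n+k-2}$. The only mild care needed is to apply the same bound on both diagonal and off-diagonal entries (the two cases produce factors of opposite sign in $1-\alpha_0 f_{ij}$, but the absolute value bound is the same), and then to sum the entry-wise squared bounds into $\|M\|_F^2$.
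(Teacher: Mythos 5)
Your proof is correct and takes essentially the same approach as the paper's: both use the same scaling $\alpha = \frac{2n(n-1)}{k(n+k-2)}$ and hinge on the identity $\alpha\gamma = \frac{n-k}{n+k-2}$, the only organizational difference being that the paper applies the triangle inequality at the Frobenius-norm level (writing $\|M-\alpha T_\cI(M)\|_F \le \|M-\alpha\overtilde M\|_F + \alpha\|\overtilde M - T_\cI(M)\|_F$) whereas you bound $|1-\alpha f_{ij}|$ entrywise and then sum squares. Both routes are sound and give the identical bound.
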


	\begin{proof}
	As in Section \ref{sec:upper1}, let $\overtilde{M} = T_{{[n] \choose k}}(M)$ be the full average matrix. Recall that $\overtilde{M}_{ii} = \overtilde{f}_{ii} M_{ii}$ for $\overtilde{f}_{ii} = \frac{k}{n}$, and $\overtilde{M}_{ij} = \overtilde{f}_{ij} M_{ij}$ for $\overtilde{f}_{ij} = \frac{k (k-1)}{n (n-1)}$ when $i \neq j$. Also let $\alpha := \frac{2n(n-1)}{k (n + k -2)}$. Finally, define $\Delta := \tilde{M} - T_{\cI}(M)$ as the error between the full and partial averages. 
	
	From triangle inequality we have
	\begin{align*}
		\|M - \alpha\, T_{\cI}(M)\|_F \le \|M - \alpha\, \overtilde{M}\|_F + \alpha\,\|\Delta\|_F. 
	\end{align*}
	Moreover, in Section \ref{sec:upper1} we proved the full average bound $\|M - \alpha\, \overtilde{M}\|_F \le \frac{n-k}{n+k-2} \|M\|_F$. Moreover, from our assumptions we have $f_{ij} \in [\overtilde{f}_{ij} - \e \gamma, \overtilde{f}_{ij} + \e \gamma]$ for all $i,j$, and hence $|\Delta_{ij}| \le \e \gamma\, |M_{ij}|$; this implies the norm bound $\|\Delta\|_F \le \e \gamma \|M\|_F$. Putting these bounds together in the previous displayed inequality gives 
	\begin{align*}
		\|M - \alpha\, T_{\cI}(M)\|_F \le  \bigg(\frac{n-k}{n+k-2} + \e \alpha \gamma\bigg)\,\|M\|_F = (1+\e) \frac{n-k}{n+k-2}\,\|M\|_F.
	\end{align*}
	This concludes the proof.
 	\end{proof}

	Finally, we use concentration inequalities to show that the ``typical'' scenario assumed in the previous lemma holds with good probability.
	
	\begin{lemma}
		With probability at least $1 - \delta$ and the parameter $m$ given in Theorem~\ref{prop:sampling}, the sequence $\cI$ is in a scenario satisfying the assumptions of Lemma \ref{lemma:typical}.
	\end{lemma}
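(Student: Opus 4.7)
The plan is to prove this by applying the Chernoff bound from Section~\ref{sec:pre} separately to each of the $n$ single-index events in condition~1 and to each of the $\binom{n}{2}$ pair events in condition~2 of Lemma~\ref{lemma:typical}, and then to combine via a union bound. Since there are at most $n^2$ such events, it will suffice to show that each individual event fails with probability at most $\delta/n^2$.

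First I would fix an index $i \in [n]$ and, for each $\ell \in [m]$, define the indicator $X_i^{(\ell)} = \mathbf{1}[i \in I_\ell]$. These are i.i.d.\ Bernoulli$(k/n)$ random variables, so $X_i := \sum_{\ell \in [m]} X_i^{(\ell)}$ has mean $\mu_i = mk/n$, and the fraction of sets in $\cI$ containing $i$ is exactly $X_i/m$. The target deviation $\e \gamma$ in condition~1 corresponds to a relative deviation $\delta_i := \e \gamma n/k = \e(n-k)/(2(n-1))$ from the mean $\mu_i$. Applying Chernoff gives
\begin{equation*}
\Pr\big(|X_i - \mu_i| > \delta_i \mu_i\big) \le 2 \exp\!\left(-\frac{\mu_i \delta_i^2}{3}\right) = 2\exp\!\left(-\frac{mk\,\e^2 (n-k)^2}{12\, n(n-1)^2}\right),
\end{equation*}
and plugging in the value of $m$ from the statement makes the exponent equal to $-\ln(2n^2/\delta)$, yielding a failure probability of at most $\delta/n^2$ for index $i$.

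Next I would repeat the argument for a fixed pair $\{i,j\}$ with $i \neq j$, letting $Y_{ij}^{(\ell)} = \mathbf{1}[\{i,j\} \subseteq I_\ell]$ be i.i.d.\ Bernoulli$(k(k-1)/(n(n-1)))$, so $Y_{ij} := \sum_\ell Y_{ij}^{(\ell)}$ has mean $\mu_{ij} = mk(k-1)/(n(n-1))$. The target deviation $\e\gamma$ now corresponds to relative deviation $\delta_{ij} = \e(n-k)/(2(k-1))$. A direct computation gives
\begin{equation*}
\frac{\mu_{ij}\, \delta_{ij}^2}{3} = \frac{m k\, \e^2 (n-k)^2}{12\, n(n-1)(k-1)} = \frac{n-1}{k-1}\cdot \ln\frac{2n^2}{\delta} \;\ge\; \ln\frac{2n^2}{\delta},
\end{equation*}
where the last inequality uses $k \le n$. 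So Chernoff again bounds the failure probability for the pair $\{i,j\}$ by $\delta/n^2$.

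Finally, a union bound over the $n$ single-index events and the $\binom{n}{2} \le n^2/2$ pair events (for a total of at most $n^2$ events) shows that all of conditions~1 and~2 hold simultaneously with probability at least $1 - \delta$. Combined with Lemmas~\ref{lemma:partialPSD} and~\ref{lemma:typical}, this yields $\dist_F(M, \alpha\, T_{\cI}(M)) \le (1+\e)\tfrac{n-k}{n+k-2}\|M\|_F$ for every $M \in \S_\cI$ simultaneously, and hence the claimed bound on $\distb_F(\S_\cI,\psd)$. The only mild subtlety I anticipate is ensuring $\delta_i, \delta_{ij} \in (0,1)$ so that the stated Chernoff bound formally applies; as the remark after Theorem~\ref{prop:sampling} observes, the only interesting regime is $\e \le 2(k-1)/(n-k)$, in which both $\delta_i$ and $\delta_{ij}$ are indeed less than $1$.
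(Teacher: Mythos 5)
Your proof is correct and follows essentially the same route as the paper: apply the Chernoff bound to the indicator sums for each index and each pair, verify that the chosen $m$ makes each individual failure probability at most $\delta/n^2$, and union-bound over all $\le n^2$ events. The only difference is your closing remark explicitly checking that the Chernoff parameter lies in $(0,1)$, a detail the paper leaves implicit.
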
	
	
	\begin{proof}
	



	As stated in Lemma~\ref{lemma:typical}, we only need that for all entries $i,j$ the fraction $f_{ij}$ deviates from its expectation by at most $+\e\gamma$, with failure probability at most $\delta$. From union bound, this can be achieved if for each entry, the probability that the deviation of its fraction $f_{ij}$ fails to be within $[-\e\gamma, \epsilon\gamma]$ is at most $\frac{\delta}{n^2}$. Now we consider both diagonal and off-diagonal terms: 

\begin{enumerate}
	\item Diagonal terms $f_{ii}$: For each $k-$set sample $I$, let $X_I$ be the indicator variable that is 0 if $i\notin I$, and 1 if $i\in I$. Notice that they are independent, with expectation $\frac{k}{n}$. Let $X=\sum_{i\in\cI} X_I$ be the sum of these variables. 
	
	From definition of $f_{ii}$ we have $X=f_{ii}m$, where $m$ is the total number of samples. From Chernoff bound, have that 
	\begin{align*}
	\Pr\bigg(\left|f_{ii}-\frac{k}{n}\right|> \e\frac{(n-k)k}{2n(n-1)}\bigg) &= \Pr\bigg(\left|X-\frac{mk}{n}\right|> \e m\frac{(n-k)k}{2n(n-1)}\bigg)\\
	&\le 2\exp\bigg(-\frac{\e^2 (n-k)^2k m}{12n(n-1)^2 }\bigg)\\
	&\le \frac{\delta}{n^2} 
	\end{align*}
	as long as  $$m\geq \frac{12n(n-1)^2}{\e^2 (n-k)^2 k}\ln\frac{2n^2}{\delta}.$$
	
	\item Off-diagonal terms $f_{ij}$: Similar to first case, now for each $k-$set sample $I$, let $X_I$ be the indicator variable that is 1 if $\{i,j\}\subset I$, and 0 otherwise. Now the expectation of each $X_I$ becomes $\frac{k(k-1)}{n(n-1)}$. Again let $X=\sum_{i\in\cI} X_I$. 
	
	Using same argument as above, $X=f_{ij}m$. From Chernoff bound we get
	\begin{align*}
	\Pr\bigg(\left|f_{ij}-\frac{k(k-1)}{n(n-1)}\right|> \e\frac{(n-k)k}{2n(n-1)}\bigg) &= \Pr\bigg(\left|X-\frac{mk(k-1)}{n(n-1)}\right|> \e m\frac{(n-k)k}{2n(n-1)}\bigg)\\
	&\le 2\exp\bigg(-\frac{\e^2 (n-k)^2k m}{12n(n-1)(k-1) }\bigg)\\
	&\le \frac{\delta}{n^2}
	\end{align*}
	as long as $$m\geq \frac{12n(n-1)(k-1)}{\e^2 (n-k)^2 k}\ln\frac{2n^2}{\delta}.$$
	\end{enumerate}

	Since we chose $m$ large enough so it satisfies both of these cases, 
	taking a union bound over all $i,j$'s we get that the probability that any of the $f_{ij}$'s is $+\e\gamma$ more than their expectations is at most $\delta$. This concludes the proof. 
	\end{proof}

Combining this with Lemma~\ref{lemma:typical}, we conclude the proof of Theorem~\ref{prop:sampling}. \\
	
	%
	
\noindent \textbf{Acknowledgements.} Grigoriy Blekherman was partially supported by NSF grant DMS-1901950.

\bibliography{QCQP}{}
\bibliographystyle{siam}

\end{document}